\documentclass[a4paper,reqno, 11pt]{amsart}
\usepackage{fouriernc} 

\usepackage{amsmath}
\usepackage{amsfonts}
\usepackage{amssymb}
\usepackage{amsthm}
\usepackage{amscd}
\usepackage{MnSymbol}
\usepackage{enumerate}
\usepackage{color}
\usepackage{hyperref} 
\usepackage[cmtip,all]{xy}
\usepackage{enumitem}

\hypersetup{
  linktoc=page
}

\setitemize{leftmargin=0.75cm}

\newcommand{\N}{\mathbf{N}}				
\newcommand{\Z}{\mathbf{Z}}				
\newcommand{\Q}{\mathbf{Q}}				
\newcommand{\F}{\mathbf{F}}				

\newcommand{\p}{\mathfrak{p}}			
\renewcommand{\O}{\mathcal{O}}			

\newcommand{\nach}[1][]{\overset{#1}{\rightarrow}}		
\newcommand{\Nach}[1][]{\overset{#1}{\longrightarrow}}	
\newcommand{\abb}{\mapsto}				
\newcommand{\inj}{\hookrightarrow}		
\newcommand{\surj}{\twoheadrightarrow}	
\newcommand{\iso}{\Nach[\cong]}	 

\newcommand{\inv}{^{-1}}								
\newcommand{\sd}{\, | \,}								
\newcommand{\Sd}{\, \big| \,}							
\newcommand{\skp}[1]{\langle #1\rangle}				
\newcommand{\nt}{\vartriangleleft}						

\DeclareMathOperator{\Ho}{H}				
\DeclareMathOperator{\B}{B}					
\DeclareMathOperator{\id}{id}				
\DeclareMathOperator{\GL}{GL}				
\DeclareMathOperator{\Tor}{Tor}			
\DeclareMathOperator{\Frac}{Frac}			

\DeclareMathOperator{\K}{K}							
\newcommand{\KM}{\K^\mathrm{M}}					
\newcommand{\iKM}{\hat{\K}^\mathrm{M}}				
\newcommand{\PhiMQ}{\Phi_{\mathrm{MQ}}}			
\newcommand{\PhiQM}{\Phi_{\mathrm{QM}}}			
\DeclareMathOperator{\T}{T}							
\DeclareMathOperator{\StR}{StR}					
\DeclareMathOperator{\BGL}{BGL} 					
\newcommand{\Mtame}{\partial}			
\newcommand{\iMtame}{\hat{\partial}}	


\newcommand{\df}[1]{\textsc{#1}}					
\newcommand{\etale}{\'{e}tale}    					

\renewcommand{\phi}{\varphi}
\renewcommand{\epsilon}{\varepsilon}
\renewcommand{\theta}{\vartheta}
\renewcommand{\rho}{\varrho}

\theoremstyle{definition}				
	\newtheorem{defin}{Definition}[section]
	
\theoremstyle{plain} 					
	\newtheorem{thm}[defin]{Theorem}
	\newtheorem*{thm*}{Theorem}
	\newtheorem{prop}[defin]{Proposition}
	\newtheorem{lemma}[defin]{Lemma}
	\newtheorem{cor}[defin]{Corollary}
\theoremstyle{remark} 					
	\newtheorem*{remark}{Remark}

\makeatletter

 \makeatother

\newcounter{formulanumber}[defin]

\newcommand{\vquad}{\vspace{6pt}}


\title[Milnor K-theory of complete DVR's with finite residue fields]{Milnor K-theory of complete discrete valuation rings with finite residue fields}
\author{Christian Dahlhausen}
\address{Fakult\"at f\"ur Mathematik, Universit\"at Regensburg, 93040 Regensburg, Germany}
\email{christian.dahlhausen(at)ur.de}
\thanks{The author is supported by the DFG GRK 1692 ``Curvature, Cycles and Cohomology''}

\begin{document}

\begin{abstract}
Consider a complete discrete valuation ring $\O$ with quotient field~$F$ and finite residue field. Then the inclusion map $\O \inj F$ induces a map $\iKM_*\O \nach \iKM_*F$ on improved Milnor K-theory. We show that this map is an isomorphism in degrees bigger or equal to 3. This implies the Gersten conjecture for improved Milnor \mbox{K-theory} for $\O$. This result is new in the $p$-adic case. 
\end{abstract}

\maketitle


	\section{Introduction}

Let $\iKM_*$ denote the improved Milnor K-theory as introduced by \df{Gabber} \cite{gabber} and mainly developed by \df{Kerz} \cite{kerz}. For fields, it coincides with the usual Milnor K-theory. For $n\geq 1$ and a discrete valuation ring $\O$ with quotient field $F$ and residue field $\kappa$, there is the so-called Gersten complex
	\begin{align*}
		0 \Nach \iKM_n\O \Nach \iKM_nF \Nach \iKM_{n-1}\kappa \Nach 0.
	\end{align*}
This complex is known to be right-exact. Is it also exact on the left side? In the equicharacteristic case this was shown by \df{Kerz} \cite[Prop.~10~(8)]{kerz}.
 As a slight extension to a special case of the mixed characteristic case, we show the exactness on the left side by proving the following result (Theorem~\ref{thm_main_result}).

\begin{thm*}
Let $\O$ be a complete discrete valuation ring with quotient field $F$ and finite residue field. Then for $n\geq 3$ the inclusion map $\iota\,\colon\,\O\inj F$ induces an isomorphism
	\begin{align*}
		\iota_* \,\colon \, \iKM_n\O \iso \iKM_n F
	\end{align*}
on improved Milnor K-theory.
\end{thm*}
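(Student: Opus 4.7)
\emph{Surjectivity.} By right-exactness of the Gersten complex, the cokernel of $\iota_*$ embeds into $\iKM_{n-1}\kappa$. Since $\kappa$ is finite and $n-1\geq 2$, the classical theorem of Bass and Tate gives $\iKM_{n-1}\kappa = 0$, so $\iota_*$ is already surjective in all degrees $n\geq 3$; the real content of the theorem is injectivity.

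\emph{Injectivity via a unit filtration.} The plan is to adapt Kerz's equicharacteristic strategy. Introduce the unit filtration $U^i\iKM_n\O$ generated by Steinberg symbols $\{u_1,\ldots,u_n\}$ with at least one entry $u_j \in 1+\maxm^i$, together with the analogous filtration on $\iKM_n F$; these are compatible under $\iota_*$. The zeroth graded piece matches $\iKM_n\kappa = 0$ (Bass--Tate again, $n\geq 2$). For $i\geq 1$, a Bloch--Gabber--Kato style computation should identify $\mathrm{gr}^i\iKM_n\O$ with a quotient of $\maxm^i/\maxm^{i+1}\otimes_\kappa \iKM_{n-1}\kappa$, plus contributions from $\Omega^{n-1}_\kappa$ that appear in mixed characteristic; both vanish in the range $n\geq 3$ because $\iKM_{n-1}\kappa = 0$ and $\Omega^1_\kappa = 0$ for the perfect finite residue field $\kappa$. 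A parallel analysis on $F$ should yield matching graded pieces, and $\maxm$-adic completeness of $\O$ should ensure $\bigcap_i U^i\iKM_n\O = 0$, so that the isomorphism on graded pieces propagates to the full groups.

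\emph{Principal obstacle.} The genuinely new ingredient beyond Kerz's equicharacteristic result is the control of the graded pieces in mixed characteristic, particularly for indices $i$ near the ramification threshold $e/(p-1)$ with $e = v(p)$, where $p$-th powers of principal units in $1+\maxm^i$ produce nontrivial relations between $\pi$ and $p$ that distort the naive symbolic description of $\mathrm{gr}^i$. In the delicate range one may have to argue one prime at a time, invoking the Bloch--Kato norm-residue isomorphism together with the bound $\mathrm{cd}(F)\leq 2$ on the cohomological dimension of the local field $F$; this forces $\iKM_n F/\ell = 0$ for every prime $\ell$ and every $n\geq 3$, and the required divisibility and torsion-freeness of $\iKM_n\O$ can then be deduced by comparing across the Gersten sequence. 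This is the step I expect to carry most of the weight, and the reason the theorem is restricted to $n\geq 3$, where $\iKM_{n-1}\kappa$ vanishes and no wild tame-symbol contribution survives.
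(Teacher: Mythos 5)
Your surjectivity argument coincides with the paper's and is correct. The injectivity argument, however, does not work as described, for two concrete reasons.

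First, the unit-filtration strategy is self-defeating. If every graded piece $\mathrm{gr}^i\,\iKM_n\O$ vanished for $n\geq 3$ \emph{and} the filtration were separated, you would conclude $\iKM_n\O=0$; but the theorem identifies $\iKM_n\O$ with $\iKM_nF$, which is uncountable (Theorem~\ref{KMF_uniquely_divisible}). What actually fails is separatedness: for $n\geq 3$ the quotient $\KM_nF/U^1\KM_nF$ is generated by symbols with entries in the finitely generated group $\pi^{\Z}\times\mu_{q-1}$, hence is a finitely generated abelian group, while it is also divisible because $\KM_nF$ is divisible in this range; so it is zero and $U^i\KM_nF=\KM_nF$ for every $i$. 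Thus $\bigcap_i U^i$ is everything rather than zero, and the graded analysis carries no information about $\ker\iota_*$. Completeness of $\O$ does not rescue this.

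Second, your fallback --- $\mathrm{cd}(F)\leq 2$ and Bloch--Kato give $\KM_nF/\ell=0$ for $n\geq 3$, and then ``comparing across the Gersten sequence'' transfers divisibility and torsion-freeness to $\iKM_n\O$ --- is circular at exactly the decisive point: left-exactness of the Milnor Gersten sequence \emph{is} the injectivity being proved, so it cannot be used to transport properties from $F$ to $\O$. The paper instead proves divisibility of $\iKM_n\O$ directly (reducing to $(\iKM_3\O)/\ell=0$; the case $\ell=p$ uses $(\iKM_2\O)/p\cong(\iKM_2F)/p$ being cyclic of order $p$ or trivial, a symbol manipulation with $\sqrt[p]{-1}$, and a norm argument when $p=2$), and then obtains injectivity not from divisibility alone but from the comparison with Quillen K-theory: $\PhiQM(\O)\circ\PhiMQ(\O)$ is multiplication by $\chi_n=(-1)^{n-1}(n-1)!$ and $\K_n\O\to\K_nF$ is injective by Gersten's theorem for algebraic K-theory. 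Given $\alpha\in\ker\iota_*$, divisibility writes $\alpha=\chi_n\beta$, torsion-freeness of $\iKM_nF$ forces $\beta\in\ker\iota_*\subseteq\ker\PhiMQ(\O)\subseteq\ker(\chi_n)$, whence $\alpha=0$. Your proposal contains no substitute for this Quillen-side input; knowing that source and target are both uniquely divisible does not by itself constrain the kernel of $\iota_*$.
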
	

In the \mbox{$p$-adic} case this is new. We prove the theorem as follows: We show that $\iKM_n\O$ is a divisible abelian group for $n\geq 3$ (Theorem \ref{KM_n_O_divisible}) using results from the appendix of \df{Milnor}'s book \cite{milnor}. Combining this with the unique divisibility of $\iKM_nF$ (proved by \df{Sivitskii} \cite{sivitskii}) and a comparision between algebraic and Milnor K-theory (proved by \df{Nesterenko} and \df{Suslin} \cite{suslin}) yields the theorem.

\emph{Acknowledgements.} I am grateful to my advisor Moritz Kerz for proposing me this interesting topic. Also, I thank Morten L\"uders for useful comments and fruitful discussions and Thomas Fiore for having a look at the language. I want to thank my unknown referee from AKT (though my submission was finally declined). Their reports helped a lot in clarifying the presentation of this paper's content.  Last, but not least, I thank Chuck Weibel for helpful comments on the presentation of the paper's content.

		\section{Milnor K-theory}
		
\begin{defin} \label{defin_milnor_k_fields}
Let $A$ be a commutative ring with unit, $\T_* A^\times$ be the (non-commutative) tensor algebra of $A^\times$ over $\Z$,  and $\StR_* A^\times$ the homogeneous ideal of $\T_* A^\times$ which is generated by the set $\{x\otimes y \in\T_2A^\times \sd x+y=1\}$ (the so-called \df{Steinberg relations}). Define the \df{Milnor K-theory} of $A$ to be the graded ring
	\begin{align*} 
		\KM_* A := \T_* A^\times/\StR_* A^\times.
	\end{align*} 
For $x_1,\ldots,x_n\in A^\times$ let $\{x_1,\ldots,x_n\}$ denote the image of $x_1\otimes\ldots\otimes x_n$ under the natural homomorphism $\T_nA^\times \nach \KM_nA$. Evidently, this yields a functor from commutative rings to abelian groups.
\end{defin}	

This notion behaves well if $A$ is a field or a local ring with infinite residue field \cite{kerz_gersten_conjecture}. But some nice properties do not hold for arbitrary commutative rings (e.g. that the natural map to algebraic K-theory is an isomorphism in degree 2). For local rings, this lack is repaired by a generalisation due to \df{Gabber} \cite{gabber}, the improved Milnor K-theory, which was mainly developed by \df{Kerz} \cite{kerz}.

\begin{defin} \label{def_rational_functions}
Let $A$ be a local ring and $n \in \N := \Z_{\geq0}$. The subset
		\begin{align*}
		S := \bigl\{ \sum_{\underline{i}\in \N^n} a_{\underline{i}}\cdot \underline{t}^{\underline{i}} \in A[t_1,\ldots,t_n] \,\Sd\, \skp{a_{\underline{i}}\sd\underline{i}\in \N^n} = A \bigr\}  
		\end{align*}
of $A[t_1,\ldots,t_n]$ is multiplicatively closed, where $\underline{t}^{\underline{i}} = t_1^{i_1}\cdot\ldots\cdot t_n^{i_n}$. Define the \df{ring of rational functions} (in $n$ variables) to be $A(t_1,\ldots,t_n) := S\inv A[t_1,\ldots,t_n]$. We obtain maps $\iota \colon A \nach A(t)$ and $\iota_1,\iota_2 \colon A(t) \nach A(t_1,t_2)$ by mapping $t$ respectively to $t_1$ or $t_2$.

For $n\geq 0$ we define the $n$\df{-th improved Milnor K-theory} of $A$ to be
	\begin{align*} 
	\iKM_nA := \ker\bigl[\KM_nA(t) \Nach[\delta^\mathrm{M}_n] \KM_nA(t_1,t_2) \bigr],
	\end{align*} 
where $\delta^\mathrm{M}_n := \KM_n(\iota_1) - \KM_n(\iota_2)$. By definition, we have an exact sequence
	\begin{align*}  \label{improved_exact_sequence}
	0 \Nach \iKM_nA \Nach[\iota_*] \KM_nA(t) \Nach[\delta^\mathrm{M}_n] \KM_nA(t_1,t_2).
	\end{align*} 
In particular, for $n=0$ we have $\iKM_0 = \ker\bigl(\Z\Nach[0]\Z\bigr) = \Z$. 
As a direct consequence of the construction we obtain a natural homomorphism  
	\begin{align*} 
		\KM_*A \Nach \iKM_*A.
	\end{align*}
\end{defin}

We state some facts about Milnor K-theory of local rings. 
\begin{prop} \label{iKM_properties}
Let $A$ be a local ring. Then:
\begin{enumerate}	
	\item $\iKM_1A \cong A^\times$.
	\item For $\alpha \in \iKM_nA$ and $\beta \in \iKM_mA$ we have $\alpha\beta = (-1)^{nm} \beta\alpha$, i.e. $\iKM_*A$ is graded-commutative.	
	\item For $x\in A^\times$ the equations $\{x,-x\} = 0$ and $\{x,x\} = \{x,-1\}$ hold in $\iKM_2A$.
	\item There exists a natural homomorphism 
		\begin{align*} 
			\PhiMQ(A) \,\colon\, \iKM_*A \Nach \K_*A.
		\end{align*}
		where $\K_*$ denotes algebraic K-theory due to \df{Quillen}. In degree 2, this is   		an isomorphism.
	\item If $A$ is a field, the natural homomorphism
		\begin{align*} 
			\KM_*A \Nach \iKM_*A
		\end{align*} 
		is an isomorphism.
	\item If $A$ is a finite field, then $\iKM_nA \cong 0$ for $n\geq 2$.
\end{enumerate}	
\end{prop}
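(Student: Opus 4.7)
The plan is to exploit the defining left-exact sequence
\begin{align*}
0 \Nach \iKM_n A \Nach \KM_n A(t) \Nach[\delta^\mathrm{M}_n] \KM_n A(t_1,t_2)
\end{align*}
and reduce each item to a statement about ordinary Milnor K-theory of the localised polynomial ring $A(t)$, whose behaviour is classical because $A(t)$ is itself a local ring with \emph{infinite} residue field $\kappa(t)$.

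First I would dispose of the formal symbol identities (ii) and (iii). Since $\iKM_*A$ embeds into $\KM_*A(t)$ and $A(t)$ has infinite residue field, Kerz's treatment of Milnor K-theory for local rings with infinite residue field yields $\{x,-x\}=0$, $\{x,x\}=\{x,-1\}$, and graded-commutativity in $\KM_*A(t)$; these restrict to the subgroup $\iKM_*A$. For (i), $\KM_1A(t)=A(t)^\times$ and a unit $f=g/h$ lies in $\ker\delta^\mathrm{M}_1$ iff $g(t_1)h(t_2)=g(t_2)h(t_1)$ in $A[t_1,t_2]$; since some coefficient of $h$ is a unit and $A$ is local, one can specialize $t_2$ (after a linear change of variable if necessary) to a value $a\in A$ with $h(a)\in A^\times$ and conclude $f=f(a)\in A^\times$, while $A^\times\inj\iKM_1A$ is immediate.

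For (v), I would invoke Milnor's split short exact sequence
\begin{align*}
0 \Nach \KM_n F \Nach \KM_n F(t) \Nach \bigoplus_{P}\KM_{n-1}F[t]/(P) \Nach 0
\end{align*}
for a field~$F$, applied first over $F$ and then iterated over $F(t_1)$, to identify $\ker\delta^\mathrm{M}_n$ with the image of $\KM_nF\inj\KM_nF(t)$. Item (vi) then follows from (v) together with the classical vanishing $\KM_n\F_q=0$ for $n\geq 2$, itself an immediate consequence of (iii) applied to a generator of the cyclic group $\F_q^\times$.

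The main obstacle is (iv). Here I would build $\PhiMQ$ by composing the inclusion $\iKM_nA\inj\KM_nA(t)$ with the natural symbol map $\KM_nA(t)\to\K_nA(t)$ available for every commutative ring, and then with a retraction $\K_nA(t)\to\K_nA$ extracted from the homotopy invariance of $\K$-theory, verifying that the composite is well defined because the combined effect of $\iota_1$ and $\iota_2$ cancels on the image. The assertion that this map is an isomorphism in degree two ultimately rests on Van der Kallen's presentation of $\K_2$ of a local ring by Dennis--Stein symbols, which matches Gabber's construction of improved Milnor K-theory --- this comparison is the substantial external input.
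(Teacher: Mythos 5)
Your overall strategy (reduce everything to $\KM_*A(t)$ for the local ring $A(t)$ with infinite residue field) is the right one and is indeed the mechanism behind Kerz's results, which the paper's own proof simply cites. However, your construction for (iv) has a genuine gap: there is no ``retraction $\K_nA(t)\to\K_nA$ extracted from the homotopy invariance of K-theory''. Homotopy invariance $\K_n(A[t])\cong\K_nA$ holds only for regular rings, and $A$ is an arbitrary local ring here; worse, $A(t)$ is a \emph{localisation} of $A[t]$, and localisation enlarges K-theory even in the regular case (already $\K_1(F(t))=F(t)^\times\neq F^\times=\K_1(F)$), so no such retraction exists even for fields. The correct mechanism --- the one carried out in Appendix~B of the paper --- is that the canonical map $\K_nA\to\hat{\K}_nA:=\ker\bigl[\K_nA(t)\to\K_nA(t_1,t_2)\bigr]$ is an isomorphism because algebraic K-theory admits compatible transfer maps for finite \etale{} extensions of local rings (Kerz, Prop.~9); one then defines $\PhiMQ(A)$ as the composite $\iKM_nA\to\hat{\K}_nA\cong\K_nA$ via the universal property of the kernel. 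Your remark that ``the combined effect of $\iota_1$ and $\iota_2$ cancels'' only shows that the image lands in $\hat{\K}_nA$; identifying $\hat{\K}_nA$ with $\K_nA$ is the substantive step, and it is a transfer argument, not homotopy invariance.

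Two smaller repairs are needed. In (i), the specialisation $t_2\mapsto a$ with $h(a)\in A^\times$ can fail precisely when the residue field is finite (take $h=t^2+t+2$ over $\Z_2$: its reduction $t(t+1)$ vanishes at every point of $\F_2$, and no linear change of variable helps), and finite residue fields are the whole point of the improved theory. The statement is still provable along your lines: since $S$ contains no zero-divisors, membership in $\ker\delta^\mathrm{M}_1$ gives $g(t_1)h(t_2)=g(t_2)h(t_1)$ honestly in $A[t_1,t_2]$; comparing coefficients yields $a_ib_j=a_jb_i$ for all $i,j$, hence $g=c\,h$ with $c=a_{j_0}b_{j_0}^{-1}$, and $c$ is a unit because some $a_{i_0}=cb_{i_0}$ is, so $f=c\in A^\times$. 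In (vi), the vanishing of $\KM_2\F_q$ is \emph{not} an immediate consequence of $\{x,x\}=\{x,-1\}$: for $q\equiv 3\pmod 4$ that identity only shows the generator $\{g,g\}$ has order dividing $2$, and one still needs Milnor's counting argument (the paper cites \cite[Ex.~1.5]{milnor}) producing a Steinberg relation $\{x,1-x\}=0$ with both entries non-squares. For calibration: the paper proves none of these items itself but refers them all to \cite{kerz} and \cite{milnor}, so your reconstruction is more ambitious than what is on the page, but it must be corrected at the points above before it counts as a proof.
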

\begin{proof}
Everything is due to \df{Kerz} and references within this proof refer to \cite{kerz} (unless stated otherwise). (i) and (ii) is [Prop.~10~(1), (2)]. The first statement of (iv) follows from [Thm.~7] and the existence of a natural map $\KM_*A \to \K_*A$. The second statement of (iv) and (iii) is [Prop.~10~(3)] combined with [Prop.~2]. (v) is [Prop.~10~(4)]. (vi) follows from (v) and \cite[Ex.~1.5]{milnor}.
Precisely, (ii) and (iii) rely on the corresponding facts for $\KM_*$; for proofs of them see e.g. \cite[Prop.~7.1.1, Lem.~7.1.2]{gille}.
\end{proof}

\begin{thm}[{\cite[Thm.~A, Thm.~B]{kerz}}]
Let $A$ be a local ring. Then:
\begin{enumerate}
	\item The natural homomorphism
	\begin{align*}
		\KM_*A \Nach \iKM_*A
	\end{align*}
is surjective.
	\item If $A$ has infinite residue field, the homomorphism
		\begin{align*} 
			\KM_*A \Nach \iKM_*A
		\end{align*} 
		is an isomorphism.
\end{enumerate}
\end{thm}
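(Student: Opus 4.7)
The plan is to construct the natural map $\KM_*A \to \iKM_*A$ and then treat the two parts by different means: (ii) by a direct specialization argument, and (i) by reduction to (ii) via an extension of scalars. The map itself comes from the inclusion $A \inj A(t)$: the two composites $\iota_1, \iota_2 \colon A(t) \to A(t_1,t_2)$, when restricted to constants, agree with the natural inclusion $A \inj A(t_1,t_2)$, so the image of $\KM_*A \to \KM_*A(t)$ lies in $\iKM_*A = \ker \delta^{\mathrm{M}}_n$.

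For part (ii), suppose $\kappa := A/\maxm$ is infinite, and let $\alpha \in \iKM_*A \subseteq \KM_*A(t)$. Fix a representation of $\alpha$ as a finite sum of symbols, together with finitely many Steinberg relations in $A(t_1,t_2)$ witnessing the defining equality $\alpha(t_1) = \alpha(t_2)$. Because $\kappa$ is infinite, we can pick $a \in A$ whose residue $\bar a \in \kappa$ avoids the finitely many zeros of the reductions of all polynomials involved, so the specialization $\sigma_a \colon A(t_1,t_2) \to A(t_1)$, $t_2 \mapsto a$, is well-defined on the relevant symbols. Applying $\sigma_a$ to $\alpha(t_1) = \alpha(t_2)$ yields $\alpha(t_1) = \sigma_a(\alpha)$ in $\KM_*A(t_1)$ with $\sigma_a(\alpha) \in \KM_*A$; renaming $t_1$ to $t$, this exhibits $\alpha$ as coming from $\KM_*A$, proving surjectivity. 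The same specialization furnishes a retraction, hence injectivity.

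For part (i), the strategy is to extend scalars to $A' := A(s)$, which is a local ring with infinite residue field $\kappa(s)$. By functoriality of $\KM_*$ and $\iKM_*$, the natural map sits in a commutative square whose bottom row $\KM_*A' \to \iKM_*A'$ is an isomorphism by (ii). Given $\alpha \in \iKM_*A$, its image in $\iKM_*A'$ corresponds under this isomorphism to some $\tilde\alpha \in \KM_*A'$, and the remaining task is to descend $\tilde\alpha$ back to an element of $\KM_*A$ whose image is $\alpha$. One would attempt this via a specialization $\sigma_a \colon A' \to A$, $s \mapsto a$, and a compatibility diagram chase showing that $\sigma_a(\tilde\alpha) \in \KM_*A$ maps to $\alpha$.

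The main obstacle is precisely this descent when $\kappa$ is finite: the residue $\bar a$ must avoid finitely many forbidden values in $\kappa$, which is impossible for small residue fields. One way forward is to pass to an unramified extension $A \inj \tilde A$ with infinite residue field (\eg a filtered colimit of finite \'etale extensions of $A$), establish the result there using (ii), and descend along a norm map $N_{\tilde A/A} \colon \KM_*\tilde A \to \KM_*A$ compatible with the natural map to $\iKM_*$. Producing such a norm for local rings, together with its requisite compatibilities with Steinberg relations and with the improved construction, is I expect the technical heart of the argument and where the genuine work of Kerz's Theorem~A lies.
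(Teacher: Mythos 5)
The paper itself offers no proof of this theorem: it is imported verbatim from Kerz \cite{kerz}, so the only meaningful benchmark is Kerz's own argument, whose outline the paper's appendix recalls ($E(A)\to\hat{E}(A)$ is an isomorphism if $A$ is local with infinite residue field, or if $E$ admits compatible norm maps for finite \etale{} extensions of local rings). Measured against that, your sketch of part (ii) is essentially correct and is the standard mechanism: a generic specialization $t_2\mapsto a$ applied to a fixed finite witness of the identity $\alpha(t_1)=\alpha(t_2)$, with $\bar a$ chosen to avoid the finitely many bad residues. The one point to handle honestly is that $\sigma_a$ is only a partially defined map, so the argument must be run on a chosen finite witness in the tensor algebra (including the multilinearity relations, not just the Steinberg ones) rather than through a globally defined homomorphism $\KM_*A(t_1,t_2)\to\KM_*A(t_1)$; this is routine but is where the actual work sits.

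Part (i), however, contains a genuine gap, and you have half-identified it yourself. Two things are missing. First, the existence of a norm map $N_{B/A}\colon\KM_*B\to\KM_*A$ for finite \etale{} extensions of local rings, compatible with the Steinberg relations and with the comparison map to $\iKM_*$, is precisely the technical heart of Kerz's Theorem~A; saying you ``expect'' it to exist leaves the main step unproven. Second, even granting the norm, the descent as you describe it does not close. There is no norm map along an infinite extension $\tilde A/A$; one must work at finite stages $A\subset A_1\subset A_2\subset\cdots$ and use that $\iKM_*$ and $\KM_*$ commute with this filtered colimit to find a finite stage $A_j$ at which the restriction of $\alpha$ lifts to $\KM_*A_j$. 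Applying $N_{A_j/A}$ then only yields, via the projection formula, that $[A_j:A]\cdot\alpha$ lies in the image of $\KM_*A\to\iKM_*A$, since $N_{A_j/A}\circ\mathrm{res}$ is multiplication by the degree, not the identity. To recover $\alpha$ itself one needs the coprime-degree trick: because $\kappa$ is finite one can choose, for two distinct primes $\ell\neq\ell'$, towers of finite \etale{} extensions of $\ell$-power and $\ell'$-power degree whose colimits have infinite residue fields, obtaining $\ell^{j}\alpha$ and $(\ell')^{j'}\alpha$ in the image and concluding by coprimality. This step is absent from your proposal, and without it the argument proves only that some nonzero multiple of each class is a sum of symbols.
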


\begin{lemma} \label{KM_DVR_generators}
Let $\O$ be a discrete valuation ring with quotient field $F$ and let $\pi\in\O$ be a uniformising element. For $n\geq 1$ we have
	\begin{align*}
		\KM_nF = \bigl< \{\pi,u_2,\ldots,u_n\},\{u_1,\ldots,u_n\} \Sd u_1,\ldots,u_n \in \O^\times \bigr>.
	\end{align*}
\end{lemma}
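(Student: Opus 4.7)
The plan is to reduce an arbitrary symbol $\{x_1,\ldots,x_n\}$ with $x_i\in F^\times$ to one of the two shapes listed, using the multilinearity of symbols together with graded-commutativity and the identity $\{\pi,\pi\}=\{\pi,-1\}$. Since these three tools are the only things available at this point in the paper, the proof is essentially a bookkeeping exercise.

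First I would write each $x_i = u_i\pi^{k_i}$ with $u_i\in\O^\times$ and $k_i\in\Z$, which is possible because $\O$ is a DVR with uniformiser $\pi$ and $F$ is its field of fractions. The symbol $\{x_1,\ldots,x_n\}$ is $\Z$-multilinear in its entries (by the defining relation of $\KM$ as a quotient of the tensor algebra of $F^\times$, together with $\{1,y_2,\ldots,y_n\}=0$), so expanding each $x_i$ as $u_i\cdot\pi^{k_i}$ and distributing writes the original symbol as a $\Z$-linear combination of symbols each of whose entries lies in $\O^\times\cup\{\pi\}$.

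Next I would reduce the number of $\pi$-entries in such a symbol to at most one. If $\pi$ occurs in two positions $i<j$, graded-commutativity (Proposition \ref{iKM_properties}(ii), which holds already for $\KM_*$) lets me move these two entries next to each other up to an overall sign. Once they are adjacent, I apply the identity $\{\pi,\pi\}=\{\pi,-1\}$ (Proposition \ref{iKM_properties}(iii), again valid for $\KM_*$) to replace the adjacent pair by $\pi$ followed by the unit $-1\in\O^\times$. Iterating, every symbol is reduced to one containing at most one $\pi$.

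Finally, if exactly one $\pi$ remains, a further application of graded-commutativity moves it to the first slot, producing a signed symbol of the form $\pm\{\pi,u_2,\ldots,u_n\}$; the sign is absorbed by replacing $u_2$ by $u_2^{-1}$ via $-\{u_2,v,\ldots\}=\{u_2^{-1},v,\ldots\}$, which again is multilinearity. Symbols containing no $\pi$ are already of the form $\{u_1,\ldots,u_n\}$ with $u_i\in\O^\times$, so together the two shapes exhaust $\KM_nF$. I do not anticipate a genuine obstacle; the only thing to watch carefully is the sign bookkeeping when several $\pi$'s are eliminated in succession, but this is absorbed into the units at each step and causes no trouble.
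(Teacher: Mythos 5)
Your proposal is correct and is exactly the argument the paper intends: the paper's own proof is the one-line remark that the claim ``follows straightforwardly from Proposition~\ref{iKM_properties}~(iii) and the fact that every $x\in F^\times$ can be written as $u\pi^k$,'' and you have simply filled in the multilinearity, commutation, and $\{\pi,\pi\}=\{\pi,-1\}$ bookkeeping that this entails. One minor remark: since the statement concerns the \emph{subgroup} generated by the listed symbols, the final sign-absorption step is not even needed, as a subgroup automatically contains additive inverses.
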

\begin{proof}
This follows straightforwardly from Proposition~\ref{iKM_properties}~(iii) and the fact that every element $x\in F^\times$ has a description $x=u\pi^k$ for suitable $u\in\O^\times$ and $k\in\Z$.
\end{proof}

\begin{thm}[{\cite[Lem.~2.1]{milnor_k}, cf. \cite[Prop.~7.1.4]{gille}}] \label{tame_symbol}
Let $\O$ be a discrete valuation ring with quotient field $F$, discrete valuation $v\, \colon\, F^\times \nach \Z$,  and residue field~$\kappa$. Then for every $n\geq 1$ there exists a unique homomorphism (the \df{tame symbol})
	\begin{align*} 
	\partial \,\colon\, \KM_nF \Nach \KM_{n-1}\kappa
	\end{align*} 
such that for every uniformising element $\pi\in\O$ and all units $u_2,\ldots,u_n\in\O^\times$ we have
	\begin{align*} 
	\partial \bigl( \{\pi,u_2,\ldots,u_n\} \bigr) = \{\bar{u}_2,\ldots,\bar{u}_n\}.
	\end{align*} 
We also write $\partial_\pi$ or $\partial_v$ to indicate the considered valuation.
\end{thm}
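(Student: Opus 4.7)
\emph{Uniqueness.} By Lemma~\ref{KM_DVR_generators}, $\KM_nF$ is generated as an abelian group by the symbols $\{\pi, u_2, \ldots, u_n\}$ and $\{u_1, \ldots, u_n\}$ with $u_i \in \O^\times$. The prescribed formula determines $\partial$ directly on the first family. For the second, I use that the formula must hold for \emph{every} uniformiser: applying it to both $\pi$ and $\pi' := u_1 \pi$, the multilinear expansion
	\begin{align*}
		\{u_1\pi, u_2, \ldots, u_n\} = \{u_1, u_2, \ldots, u_n\} + \{\pi, u_2, \ldots, u_n\}
	\end{align*}
forces $\partial(\{u_1, u_2, \ldots, u_n\}) = 0$. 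Hence $\partial$ is pinned down on all generators, proving uniqueness.

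\emph{Existence.} I follow Milnor's classical strategy: construct $\partial$ as the ``$\xi$-coefficient'' of a graded ring homomorphism into an auxiliary graded-commutative $\KM_*\kappa$-algebra
	\begin{align*}
		L_* := \KM_*\kappa[\xi]/(\xi^2 + \{-1\}_\kappa \xi),
	\end{align*}
where $\xi$ has degree $1$, so $L_n \cong \KM_n\kappa \oplus \KM_{n-1}\kappa \cdot \xi$ as abelian groups. The relation on $\xi^2$ mirrors the identity $\{\pi,\pi\} = \{\pi,-1\}$ from Proposition~\ref{iKM_properties}~(iii). Fix a uniformiser $\pi$ and define
	\begin{align*}
		\psi\colon F^\times \Nach L_1, \qquad u\pi^a \abb \{\bar u\}_\kappa + a\xi \qquad (u \in \O^\times, \, a \in \Z),
	\end{align*}
using the splitting $F^\times \cong \O^\times \oplus \pi^\Z$. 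Once $\psi$ is shown to respect Steinberg, it extends to a graded ring map $\tilde\psi\colon \KM_*F \to L_*$, and projecting each $L_n$ onto its $\KM_{n-1}\kappa \cdot \xi$ summand yields $\partial$ (up to a routine universal sign). The formula on $\{\pi, u_2, \ldots, u_n\}$ is then immediate; and for any other uniformiser $\pi' = v\pi$, the extra contribution $\tilde\psi(\{v, u_2, \ldots, u_n\}) \in \KM_n\kappa$ has trivial $\xi$-component, so the formula holds for every uniformiser as required.

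\emph{The crux.} The technical heart is verifying $\psi(x)\psi(y) = 0$ in $L_2$ whenever $x + y = 1$. Writing $a := v(x)$, $b := v(y)$: the case $a = b = 0$ reduces at once to Steinberg in $\kappa$; if $a > 0$, then $b = 0$ and $\bar y = 1$, so $\psi(y) = 0$ (and symmetrically for $b > 0$); the only serious case is $a = b < 0$. Writing $x = u\pi^a$, $y = w\pi^a$ with $u + w = \pi^{-a} \in \maxm$, one has $\bar w = -\bar u$; using $\{\bar u, -\bar u\}_\kappa = 0$, the anticommutation $\xi\{\bar w\} = -\{\bar w\}\xi$ forced by graded-commutativity of $L_*$, and the defining relation $\xi^2 = -\{-1\}_\kappa \xi$, the product collapses to $a(1-a)\{-1\}_\kappa \cdot \xi$. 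Since $a(1-a)$ is always even (product of consecutive integers) and $2\{-1\}_\kappa = \{1\} = 0$ in $\KM_1\kappa$, this term vanishes. This subtle $2$-torsion phenomenon at $-1$ is the essential ingredient that makes the construction go through.
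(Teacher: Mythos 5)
The paper offers no proof of this statement (it is quoted from Milnor [Lem.~2.1] and Gille--Szamuely [Prop.~7.1.4]), and your argument is precisely the standard construction found in those references: uniqueness via the generators of Lemma~\ref{KM_DVR_generators} together with the freedom in the choice of uniformiser, and existence via the auxiliary graded algebra $\KM_*\kappa[\xi]/(\xi^2+\{-1\}\xi)$, with the exhaustive valuation case analysis and the parity observation $a(1-a)\in 2\Z$, $2\{-1\}=\{1\}=0$ disposing of the only delicate case $v(x)=v(y)<0$. Your computation checks out; the single step taken on faith is the routine verification that $L_*$ is a well-defined associative graded ring with underlying group $\KM_n\kappa\oplus\KM_{n-1}\kappa\cdot\xi$ in each degree, which is exactly what the cited sources supply.
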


\begin{prop} \label{tame_symol_exact_sequence}
Let $\O$ be a discrete valuation ring with quotient field $F$ and residue field $\kappa$. For $n\geq 1$ we have an exact sequence of groups
	\begin{align*} 
		0 \Nach V_n \Nach \KM_nF \Nach[\partial] \KM_{n-1}\kappa \Nach 0,
	\end{align*} 
where $V_n$ is the subgroup of $\KM_nF$ generated by 
	\begin{align*}
		\bigl\{ \{u_1,\ldots,u_n\} \Sd u_1,\ldots,u_n\in\O^\times \bigr\}.
	\end{align*}
As a consequence, we have an exact sequence
	\begin{align*} 
		\iKM_n\O \Nach[\iota_*] \iKM_nF \Nach[\iMtame] \iKM_{n-1}\kappa \Nach 0
	\end{align*} 
where $\iMtame$ is induced by $\Mtame$ and the natural isomorphism of Proposition~\ref{iKM_properties}~(v).
\end{prop}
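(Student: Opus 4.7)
The plan is to prove the explicit exact sequence involving $\KM_*$ first, and then transfer the result to $\iKM_*$ using Proposition~\ref{iKM_properties}~(v). For the first sequence, surjectivity of $\partial$ is immediate from Theorem~\ref{tame_symbol}: the target $\KM_{n-1}\kappa$ is generated by symbols $\{\bar v_2,\ldots,\bar v_n\}$ with $\bar v_i \in \kappa^\times$, and each such generator is hit by $\{\pi,u_2,\ldots,u_n\}$ for any lifts $u_i \in \O^\times$. The inclusion $V_n \subseteq \ker(\partial)$ will follow by extending the defining formula of $\partial$: using Lemma~\ref{KM_DVR_generators} and multilinearity, one derives that $\partial(\{u_1,\ldots,u_n\}) = 0$ whenever all $u_i \in \O^\times$.

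The hard part is the reverse inclusion $\ker(\partial) \subseteq V_n$. The approach is Milnor's specialisation trick: construct, for the fixed uniformiser $\pi$, a homomorphism
\begin{align*}
	s_\pi \,\colon\, \KM_nF \Nach \KM_n\kappa
\end{align*}
that sends $\{u_1,\ldots,u_n\}$ to $\{\bar u_1,\ldots,\bar u_n\}$ and sends $\{\pi,u_2,\ldots,u_n\}$ to $0$ for $u_i \in \O^\times$. Well-definedness reduces to checking that $s_\pi$ respects the Steinberg relations $\{x,1-x\}=0$, and this is the principal technical obstacle. The verification proceeds by case analysis on $v(x)$ and $v(1-x)$, exploiting that they cannot both be strictly positive (since $1 = x + (1-x)$ has valuation zero) and that if $v(x) > 0$ then $1-x$ is automatically a unit congruent to $1$ modulo $\pi$, so the image $\{\bar u,\overline{1-x}\} = \{\bar u,1\}$ vanishes. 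Once $s_\pi$ is available, the pair $(s_\pi,\partial)$ becomes an isomorphism $\KM_nF \iso \KM_n\kappa \oplus \KM_{n-1}\kappa$ with inverse given on the first factor by lifting units and on the second factor by $\{\bar v_2,\ldots,\bar v_n\} \mapsto \{\pi,v_2,\ldots,v_n\}$. Since the inverse on the $\KM_n\kappa$-factor lands in $V_n$, every element of $\ker(\partial)$ lies in $V_n$, completing the exactness.

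For the consequence on improved Milnor K-theory, Proposition~\ref{iKM_properties}~(v) identifies $\iKM_nF = \KM_nF$ and $\iKM_{n-1}\kappa = \KM_{n-1}\kappa$, so $\partial$ descends to the homomorphism $\iMtame$ and surjectivity transfers immediately. For exactness at $\iKM_nF$, the image of $\iota_* \colon \iKM_n\O \to \iKM_nF$ coincides with the image of $\KM_n\O \to \KM_nF$, because $\KM_n\O \to \iKM_n\O$ is surjective by Kerz's Theorem~A cited above. By construction this image is exactly $V_n$, which by the first part of the proposition equals $\ker(\partial) = \ker(\iMtame)$, giving the desired exactness.
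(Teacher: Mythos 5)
Your reduction of the second sequence to the first is sound and is essentially the paper's own argument: the paper proves exactness at $\iKM_nF$ by a diagram chase using the surjectivity of $\KM_n\O \to \iKM_n\O$ and the isomorphisms $\KM_nF \cong \iKM_nF$, $\KM_{n-1}\kappa \cong \iKM_{n-1}\kappa$, which is what your identification of $\im(\iota_*)$ with $V_n$ amounts to. Likewise your treatment of surjectivity of $\partial$ and of the inclusion $V_n \subseteq \ker(\partial)$ is fine. The problem is the reverse inclusion $\ker(\partial) \subseteq V_n$ (which the paper does not prove either; it cites \cite[Prop.~7.1.7]{gille}). Your claim that $(s_\pi,\partial)$ is an isomorphism $\KM_nF \iso \KM_n\kappa \oplus \KM_{n-1}\kappa$ is false for a general discrete valuation ring. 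Already for $n=1$ the map $(s_\pi,v)\colon F^\times \to \kappa^\times \oplus \Z$ has kernel $1+\maxm \neq 1$; for $n=2$ and $\O=\Z_p$ the group $\KM_2\Q_p$ is uncountable while $\KM_2\F_p \oplus \KM_1\F_p$ is finite. Correspondingly, the proposed inverse ``on the first factor by lifting units'' is not a well-defined homomorphism $\KM_n\kappa \to \KM_nF$: a lift of $\bar u$ is only determined up to multiplication by $1+\maxm$, and $\{u,w\}$ with $w\in 1+\maxm$ need not vanish in $\KM_2F$; moreover a Steinberg relation $\bar x + \bar y = 1$ in $\kappa$ only gives $x+y \in 1+\maxm$, not $x+y=1$. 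Since your entire argument for $\ker(\partial)\subseteq V_n$ rests on this isomorphism, this is a genuine gap, not a fixable slip: the specialisation map cannot detect $V_n$, which is in general much larger than $\KM_n\kappa$.

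The standard repair (and the content of the cited result in Gille--Szamuely) is to invert $\partial$ only after passing to the quotient: one shows that the assignment $\{\bar u_2,\ldots,\bar u_n\} \mapsto \{\pi,u_2,\ldots,u_n\} \bmod V_n$ gives a well-defined homomorphism $\rho\colon \KM_{n-1}\kappa \to \KM_nF/V_n$. Here the key technical lemma is that $\{\pi,w\} \in V_2$ for every $w \equiv 1 \bmod \maxm$: writing $w = 1 + a\pi^k$ with $a\in\O^\times$, the Steinberg relation $\{-a\pi^k, 1+a\pi^k\}=0$ gives $k\{\pi,w\} \in V_2$, and after factoring $w$ into elements $w'$ with $v(w'-1)=1$ one gets $\{\pi,w\}\in V_2$ itself. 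This lemma yields independence of the lifts, and compatibility with the Steinberg relations in $\kappa$ follows similarly. Since $\KM_nF/V_n$ is generated by the classes of $\{\pi,u_2,\ldots,u_n\}$ by Lemma~\ref{KM_DVR_generators}, $\rho$ is a surjective inverse to the map induced by $\partial$, whence $\ker(\partial) = V_n$. If you want to keep your write-up self-contained, replace the paragraph about $(s_\pi,\partial)$ by this argument; otherwise, do as the paper does and cite \cite{gille} for the first sequence.
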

\begin{proof}
For the exactness of the first sequence look at \cite[Prop.~1.7.1]{gille}. The exactness of the second sequence is shown by a diagram chase in the commutative diagram
	\[ \begin{xy}
	\xymatrix{
		\KM_n\O \ar[r]^{\iota_*} \ar@{->>}[d]
		& \KM_nF \ar[r]^\Mtame \ar[d]^-\cong
		& \KM_{n-1}\kappa \ar[r] \ar[d]^-\cong 
		& 0 \\
		\iKM_n\O \ar[r]^{\iota_*}
		& \iKM_nF \ar[r]^\iMtame
		& \iKM_{n-1}\kappa \ar[r]
		& 0	. 
	} 
	\end{xy}  \] 
\end{proof}

\begin{prop}[{\cite[Thm.~1.3 a)]{gersten}, cf. \cite[IV.~Cor.~1.13; V.~Cor.~6.9.2]{weibel}}] \label{gersten_conjecture}
Let $\O$ be a discrete valuation ring with quotient field $F$ and \emph{finite} residue field $\kappa$. Then for all $n\geq 0$ there is an exact sequence
	\begin{align*}
		0 \Nach \K_n\O \Nach \K_nF \Nach \K_{n-1}\kappa \Nach 0.
	\end{align*}
Furthermore, $\K_{2n}\kappa=0$ for $n\geq 1$ \cite{quillen}.
\end{prop}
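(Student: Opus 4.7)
The plan is to derive the short exact sequence from Quillen's localisation theorem together with his computation of $\K_*(\F_q)$ (which also yields the second claim directly). First, I would apply Quillen's localisation theorem to the Serre subcategory of torsion modules inside the abelian category $M(\O)$ of finitely generated $\O$-modules: the quotient abelian category is equivalent to $M(F)$. Quillen's devissage identifies the K-theory of the torsion subcategory with $\K_*(\kappa)$ (every torsion module is filtered by $\kappa$-vector subquotients via multiplication by $\pi$), and the resolution theorem, applicable because $\O$ and $F$ are regular, identifies $\K_n(M(\O)) \cong \K_n(\O)$ and $\K_n(M(F)) \cong \K_n(F)$. Assembling, I obtain a long exact sequence
\begin{align*}
\cdots \to \K_n(\kappa) \to \K_n(\O) \to \K_n(F) \to \K_{n-1}(\kappa) \to \K_{n-1}(\O) \to \cdots .
\end{align*}

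To break this into the desired short exact sequences, it suffices to show that every connecting map $\K_n(\kappa) \to \K_n(\O)$ vanishes. For $n=0$, the projective resolution $0 \to \O \xrightarrow{\pi} \O \to \kappa \to 0$ forces $[\kappa] = 0$ in $\K_0(\O)$, so the map from $\K_0(\kappa) = \Z$ is zero. For even $n = 2m \geq 2$, the source is itself zero by Quillen's computation $\K_{2m}(\F_q) = 0$, which simultaneously establishes the final assertion of the proposition.

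The main obstacle is the odd-degree case $n = 2m - 1 \geq 1$, where $\K_{2m-1}(\F_q) \cong \Z/(q^m - 1)$ is non-trivial. The Milnor tame symbol is of no direct use here, since $\KM_r(\F_q) = 0$ for $r \geq 2$; one must work with genuine Quillen K-theory. The plan is to exhibit surjectivity of the Quillen boundary $\K_{2m}(F) \to \K_{2m-1}(\F_q)$ by exploiting Quillen's description of the latter via Brauer lifting. Concretely, a generator of $\K_{2m-1}(\F_q)$ arises from a virtual $\F_q$-representation whose character is supported on roots of unity; after passing to a sufficiently large unramified extension $\O'/\O$ whose residue field $\F_{q^r}$ contains the requisite roots of unity, these roots lift to $\O'^\times$ via Teichm\"uller (after Henselisation) and assemble into a class in $\K_{2m}(F')$. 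Computing its boundary in $\K_{2m-1}(\F_{q^r})$ and descending along the norm $\K_{2m-1}(\F_{q^r}) \to \K_{2m-1}(\F_q)$, using compatibility of the Quillen boundary with transfer throughout, should recover the chosen generator. This interaction of Brauer lifting, transfer, and the boundary map is the technical heart of the argument, and is essentially where the finiteness hypothesis on the residue field is indispensable.
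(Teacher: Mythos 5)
First, note that the paper itself gives no proof of this proposition: it is imported from the literature (Gersten's Theorem~1.3\,a) and Weibel's K-book), so there is no in-paper argument to measure yours against and I can only judge the proposal on its own terms. The first half of your proposal is correct and is the standard reduction: localisation for the Serre subcategory of finite-length $\O$-modules, d\'evissage to identify its K-theory with $\K_*\kappa$, the resolution theorem to pass back from $G$-theory to $K$-theory, and the observations that the transfer $\K_0\kappa\to\K_0\O$ kills $[\kappa]$ (via $0\to\O\xrightarrow{\pi}\O\to\kappa\to0$) and that $\K_{2m}\F_q=0$ for $m\geq1$ dispose of $n=0$ and of even positive $n$, the latter also giving the final assertion.

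The odd-degree case, however, contains a genuine gap which you have located but not closed. First, Brauer lifting is a comparison between $\K_*\F_q$ and \emph{topological} (complex) K-theory; it does not manufacture elements of $\K_{2m}F'$ for a local field $F'$, and your sketch gives no actual mechanism for producing a class in $\K_{2m}F'$ with prescribed non-zero boundary. By exactness of the localisation sequence, the existence of such classes is \emph{equivalent} to the vanishing of the transfer $\K_{2m-1}\F_{q^r}\to\K_{2m-1}\O'$, i.e. to the very statement being proved, so at its crucial point the sketch is close to circular. Second, the parenthetical ``after Henselisation'' is not harmless for the proposition as stated (an arbitrary discrete valuation ring): surjectivity of the boundary for $F^{h}$ does not formally imply it for $F$, since $\K_{2m}F\to\K_{2m}F^{h}$ need not be surjective (equivalently, vanishing of the transfer into $\K_{2m-1}\O^{h}$ does not give vanishing into $\K_{2m-1}\O$, as $\K_{2m-1}\O\to\K_{2m-1}\O^{h}$ need not be injective). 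The cited sources close the argument by proving directly that the transfer $\K_n\kappa\to\K_n\O$ vanishes in every degree; that vanishing in odd degrees is exactly the step your proposal still needs to supply.
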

In particular, by using Proposition~\ref{iKM_properties}~(iv), there is a short exact sequence
	\begin{align} \label{gersten_sequence_milnor_2}
		0 \Nach \iKM_2\O \Nach \iKM_2F \Nach \iKM_1\kappa \Nach 0.
	\end{align}

The goal is to generalise the sequence \eqref{gersten_sequence_milnor_2} to arbitrary $n\geq 2$. We want to know if for any discrete valuation ring $\O$ and all $n\geq 2$ the sequences
	\begin{align}  \label{gersten_sequenz}
		0 \Nach \iKM_n\O \Nach[\iota_*] \iKM_nF \Nach[\Mtame] \iKM_{n-1}\kappa \Nach 0
	\end{align}
are exact? For algebraic K-theory, \df{Gersten} conjectured that the analogous sequences to \eqref{gersten_sequenz} are exact for all $n\geq 2$ \cite{gersten}. Thus we refer to this question as the Gersten conjecture for improved Milnor K-theory. In the case that $\O$ is complete and $\kappa$ is finite this will be an immediate consequence of our main result which relies on an analogous statement by \df{Nesterenko} and \df{Suslin} for local rings with infinite residue field and classical Milnor K-theory \cite[Thm.~4.1]{suslin}. A proof based on their result will be presented in the appendix.

\begin{prop}[{\cite[Prop.~10~(6)]{kerz}}] \label{MQM}
Let $A$ be a local ring and $n\in\N$. Then there exists a map
	\begin{align*}
		\PhiMQ(A) \,\colon\, \iKM_nA \Nach \K_nA
	\end{align*}
as well as a map
	\begin{align*}
		\PhiQM(A) \,\colon\, \K_nA \Nach \iKM_nA
	\end{align*}
such that the composition
	\begin{align*}
		\iKM_nA \Nach[\PhiMQ(A)] \K_nA \Nach[\PhiQM(A)] \iKM_nA
	\end{align*}
is multiplication with $\chi_n := (-1)^{n-1}\cdot (n-1)!$.
\end{prop}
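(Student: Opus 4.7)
The plan is to construct $\PhiMQ$ and $\PhiQM$ independently, and then verify by a direct computation on symbols that the composite equals $\chi_n\cdot\id$. Both constructions are local-ring analogues of the classical story for fields due to \df{Milnor} and \df{Nesterenko}--\df{Suslin}, packaged into the improved Milnor framework by \df{Kerz}.

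I would begin with $\PhiMQ$. The graded product in Quillen K-theory gives a natural map $\K_1(A)^{\otimes n}=(A^\times)^{\otimes n}\nach\K_nA$; since the Steinberg relation $\{x,1-x\}=0$ holds in $\K_2A$ for any commutative ring, this descends to a natural homomorphism $\KM_nA\nach\K_nA$. To pass to $\iKM$, apply this construction over $A(t)$, which has infinite residue field so that $\KM_nA(t)=\iKM_nA(t)$ by Proposition~\ref{iKM_properties}~(v). An element of $\iKM_nA\subseteq\KM_nA(t)$ is then sent to $\K_nA(t)$; being killed by $\KM_n(\iota_1)-\KM_n(\iota_2)$ forces the image in $\K_nA(t)$ to come from $\K_nA$ (seen by specialisation at any $A$-point of $\Spec A[t]$), yielding the required $\PhiMQ\colon\iKM_nA\nach\K_nA$.

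For $\PhiQM$ I would follow \df{Suslin}: compose the Hurewicz map $\K_nA=\pi_n\BGL(A)^+\nach\Ho_n(\BGL(A),\Z)$ with restriction to homology of the diagonal torus and the natural projection onto $\bigwedge^n A^\times$; after killing the Steinberg relations this produces $\K_nA\nach\KM_nA$, which is lifted to $\iKM_nA$ by the same $A(t)$-mechanism as above. To compute $\PhiQM\circ\PhiMQ$, it suffices by the surjection $\KM_nA\surj\iKM_nA$ recalled above to check the identity on a symbol $\{u_1,\ldots,u_n\}$: under $\PhiMQ$ it becomes the cup product $u_1\cup\ldots\cup u_n\in\K_nA$, and $\PhiQM$ then returns $\chi_n\cdot\{u_1,\ldots,u_n\}$ by the Newton identity relating power sums to elementary symmetric functions. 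Linearity gives $\PhiQM\circ\PhiMQ=\chi_n\cdot\id$ globally.

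The main obstacle is the construction of $\PhiQM$: it has no purely algebraic description and requires topological input via the Hurewicz map, and its compatibility with the $\iKM$-mechanism has to be set up with care. Moreover, the scalar $\chi_n=(-1)^{n-1}(n-1)!$ reflects precisely the combinatorial gap between Milnor-style alternating symbols and Quillen-style cup products --- as already in the field case, obtaining this factor requires the Newton identity and is the technical heart of the computation.
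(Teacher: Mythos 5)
Your overall route coincides with the paper's: construct $\PhiMQ$ from the graded product $\K_1(A)^{\otimes n}\nach\K_nA$ modulo Steinberg relations, construct $\PhiQM$ from the Nesterenko--Suslin map for local rings with infinite residue field, and pass to general local rings through $A(t)$ and $A(t_1,t_2)$. However, there is a genuine gap at the pivotal step. You claim that an element of $\iKM_nA\subseteq\KM_nA(t)$, once pushed into $\K_nA(t)$, must ``come from $\K_nA$, seen by specialisation at any $A$-point of $\Spec A[t]$.'' There is no such specialisation: for every $a\in A$ the polynomial $t-a$ is monic, hence lies in the multiplicative set $S$ defining $A(t)$, so $t-a$ is a unit in $A(t)$ and no evaluation homomorphism $A(t)\nach A$ exists. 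The statement you need is precisely that the natural map $\K_nA\nach\hat{\K}_nA=\ker\bigl[\K_nA(t)\nach\K_nA(t_1,t_2)\bigr]$ is an isomorphism, and this is not formal: it is Kerz's Proposition~9, whose hypothesis is that the functor admits compatible norm (transfer) maps for finite \etale{} extensions of local rings --- which algebraic K-theory does, but which has to be invoked explicitly. Without this input your $\PhiMQ$ only lands in $\hat{\K}_nA$ and your $\PhiQM$ is only defined on $\hat{\K}_nA$, so the proposition as stated is not yet proved.

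Two smaller points. First, your verification of $\PhiQM\circ\PhiMQ=\chi_n\cdot\id$ ``on a symbol via the Newton identity'' is in effect a proposal to reprove Nesterenko--Suslin's Theorem~4.1; the paper treats that theorem as a black box and instead deduces the identity for $\iKM_nA$ purely formally, from naturality of $\PhiMQ'$ and $\PhiQM'$ applied to the columns $A(t)$ and $A(t_1,t_2)$ together with the universal property of the kernel. This formal deduction also sidesteps the issue that Nesterenko--Suslin cannot be applied to $A$ itself when the residue field is finite, so any symbol computation would anyway have to be transported through $A(t)$. Second, the description of $\PhiQM'$ via ``restriction to the diagonal torus and projection onto $\bigwedge^nA^\times$'' is not Suslin's construction; the map is obtained from the Hurewicz map together with the isomorphisms $\Ho_n(\GL_n(A))\iso\Ho_n(\GL(A))$ and $\Ho_n(\GL_n(A))/\Ho_n(\GL_{n-1}(A))\iso\KM_nA$. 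These are cosmetic, but the missing transfer argument for $\K_*A\iso\hat{\K}_*A$ is a real hole that must be filled.
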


	\section{Divisibility of $\iKM_n\O$ for $n\geq 3$}
			\label{section_div_of_iKM}
		
In this section we prove that $\iKM_n\O$ is divisible for a complete discrete valuation ring $\O$ with finite residue field and $n\geq 3$. This result will be the key ingredient for the proof of our main result. First, we examine the divisibility prime to $p$.

\begin{defin}
For an abelian group $A$ and $m\in\Z$ we set $A/m := A/mA$, where $mA = \{ma \sd a\in A\}$. Thus $A/m\cong A\otimes_\Z\Z/m$.
\end{defin}

\begin{lemma} \label{mod_m}
Let $\O$ be a complete discrete valuation ring with quotient field~$F$ and finite residue field $\kappa$ of characteristic $p$. Furthermore, let $m\in\Z$ be such that $(p,m)=1$. Then the canonical projection $\O \surj \kappa$ induces an isomorphism
	\begin{align*}
		\bigl(\KM_*\O\bigr)/m \iso \bigl(\KM_*\kappa\bigr)/m
	\end{align*}
of graded rings.
\end{lemma}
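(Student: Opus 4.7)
My approach is to reduce the claim to showing that the kernel of the natural surjection $\KM_*\O \twoheadrightarrow \KM_*\kappa$ is contained in $m\KM_*\O$. The map is already surjective (lift any unit from $\kappa^\times$ to $\O^\times$), so mod-$m$ surjectivity is automatic, and the graded-ring compatibility is free since $\O \to \kappa$ is a ring homomorphism.

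For the first step, let $V_*$ denote the two-sided homogeneous ideal of $\KM_*\O$ generated by $1 + \maxm \subseteq \O^\times = \KM_1\O$; clearly $V_* \subseteq \ker$ since $\bar v = 1$ whenever $v \in 1+\maxm$. To prove equality I construct a section $s\colon \KM_*\kappa \to \KM_*\O/V_*$ of the projection. Pick any set-theoretic lift $\ell\colon \kappa^\times \to \O^\times$ of the reduction map and put $s\bigl(\{\bar u_1,\ldots,\bar u_n\}\bigr) := \{\ell(\bar u_1),\ldots,\ell(\bar u_n)\} + V_*$; this is independent of $\ell$ modulo $V_*$ because any two lifts of the same residue class differ by a factor in $1+\maxm$. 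For the Steinberg relations: if $\bar x + \bar y = 1$ in $\kappa^\times$, then modulo $V_*$ one has $\{\ell(\bar x),\ell(\bar y)\} \equiv \{\ell(\bar x), 1-\ell(\bar x)\}$, and the right-hand side vanishes in $\KM_2\O$ itself by the Steinberg relation, because $1-\ell(\bar x) \in \O^\times$ lifts $\bar y \neq 0$. Both composites act as the identity on generators, so the projection is an isomorphism and $V_* = \ker$.

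For the second step, I show $V_* \subseteq m\KM_*\O$. Given $v \in 1+\maxm$, apply Hensel's lemma to $f(X) := (1+X)^m - v$: one has $f(0) = 1 - v \in \maxm$ and $f'(0) = m \in \O^\times$ (a unit because $(m,p)=1$), and completeness of $\O$ provides $b \in \maxm$ with $w := 1+b$ satisfying $w^m = v$. Multi-linearity of the symbol then gives
\[
\{v, u_1,\ldots,u_{n-1}\} = m\{w, u_1,\ldots,u_{n-1}\} \in m\KM_*\O
\]
for each generator of $V_*$. Combining the two steps yields $\ker \subseteq m\KM_*\O$, so the induced map $(\KM_*\O)/m \iso (\KM_*\kappa)/m$ is injective, hence the claimed isomorphism of graded rings.

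I foresee no serious obstacle: Step 1 is the standard lift-and-quotient identification of the kernel, and Step 2 is a single clean application of Hensel. The only delicate point is checking that the section respects the Steinberg relations, which depends on the fortunate fact that $1-\ell(\bar x)$ is automatically a unit once $\bar y \neq 0$, allowing direct appeal to the Steinberg relation of $\O$.
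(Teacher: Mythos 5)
Your proof is correct, but it takes a genuinely different route from the paper's. The paper works with the presentation $0 \to \StR_n\O^\times \to \T_n\O^\times \to \KM_n\O \to 0$ and its analogue over $\kappa$, tensors both with $\Z/m$, and applies the five lemma: the middle map is an isomorphism because $\O^\times/m \cong \kappa^\times/m$ (Hensel), and the map on Steinberg ideals is surjective because a relation $\bar{x}_1+\bar{x}_2=\bar{1}$ lifts to a genuine Steinberg relation in $\O$ after rescaling both entries by $(x_1+x_2)^{-1} \in 1+\maxm$. You instead compute the \emph{integral} kernel of $\KM_*\O \surj \KM_*\kappa$, identifying it as the ideal $V_*$ generated by $1+\maxm$ via a section built from set-theoretic lifts, and then show $V_* \subseteq m\KM_*\O$ by extracting $m$-th roots of principal units with Hensel. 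The two arguments use the same two inputs in mirror-image ways — Hensel's lemma ($m$-divisibility of $1+\maxm$ versus $\O^\times/m \cong \kappa^\times/m$) and a lifting of Steinberg relations (your replacement of $\ell(\bar{y})$ by $1-\ell(\bar{x})$ versus the paper's rescaling by $(x_1+x_2)^{-1}$) — but your version buys a stronger intermediate statement, namely the integral identification $\ker(\KM_*\O \to \KM_*\kappa) = V_*$, at the cost of slightly more bookkeeping (one should spell out that the assignment $\bar{u} \mapsto \ell(\bar{u})$ induces a well-defined map on tensor algebras modulo $V_*$, i.e.\ multilinearity, which follows from the same ``lifts differ by $1+\maxm$'' observation you already make); the paper's version is more formal and avoids naming the kernel.
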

\begin{proof}
The claim is trivial for $\KM_0$. From Hensel's Lemma we obtain
	\begin{align*}
	\O^\times/m \cong (U_1 \times \kappa^\times)/m \cong (U_1/m) \times (\kappa^\times/m) \cong \kappa^\times/m.
	\end{align*}
This implies the claim for $\KM_1$. Now let $n\geq 2$. Then we have the following commutative diagram with exact rows
	\begin{align} \label{diag1}
	\begin{xy}  
	\xymatrix{
	0 \ar[r] & \StR_n\O^\times \ar[r] \ar[d] & \T_n\O^\times \ar[r] \ar[d] & \KM_n\O \ar[r] \ar[d] & 0  \\
	0 \ar[r] & \StR_n\kappa^\times \ar[r] & \T_n\kappa^\times \ar[r] & \KM_n\kappa \ar[r] & 0,
	}  
	\end{xy} 
	\end{align}
where $\T_*\O^\times$ and $\T_*\kappa^\times$ denote the tensor algebras of $\O^\times$ and $\kappa^\times$ and
	\begin{align*} 
	\StR_n\O^\times & = \skp{x_1\otimes\ldots\otimes x_n \in \T_n\O^\times \sd \exists  i\neq j \colon x_i+x_j=1} \\
	\StR_n\kappa^\times & = \skp{\bar{x}_1\otimes\ldots\otimes\bar{x}_n \in \T_n\kappa^\times\sd \exists  i\neq j \colon \bar{x}_i+\bar{x}_j=\bar{1}}.
	\end{align*}
Note that $\overline{y}=\overline{1}$ in $\kappa$ implies $y\in U_1$. Tensoring the diagram \eqref{diag1} with $\Z/m$ we obtain the following commutative diagram with exact lines
	\begin{align} \label{diag2}
	 \begin{xy} 
	\xymatrix{
	\StR_n\O^\times/m \ar[r] \ar[d]^\alpha & \T_n\O^\times/m \ar[r] \ar[d]^\beta 
		& \KM_n\O/m \ar[r] \ar[d]^\gamma & 0  \\
	\StR_n\kappa^\times/m \ar[r] & \T_n\kappa^\times/m \ar[r] 
		& \KM_n\kappa/m \ar[r] & 0.
	}  
	\end{xy}
	\end{align}
It is easy to check that $(\T_*A)/m\cong\T_*(A/m)$ for an arbitrary abelian group $A$; along with $\O^\times/m\cong\kappa^\times/m$ we see that $\beta$ is an isomorphism.

Now let's show that $\alpha$ is surjective. Therefore let $n\in\N$ and $\bar{x}_1 \otimes\ldots\otimes \bar{x}_n \in\StR_n\kappa^\times$. Without loss of generality we can assume that $\bar{x}_1+\bar{x}_2=\bar{1}$, i.e. $x_1+x_2=:u\in U_1$, hence $u\inv x_1+u\inv x_2=1$ in $\O$. Thus $\xi:= u\inv x_1\otimes u\inv x_2\otimes x_3\otimes\ldots\otimes x_n\in\StR_n\O^\times$ satisfies $\alpha(\xi) = \bar{x}_1 \otimes\ldots\otimes \bar{x}_n$. So $\alpha$ is surjective.

Applying the five lemma to \eqref{diag2} demonstrates that $\gamma$ is an isomorphism. This concludes the proof.
\end{proof}

\begin{cor} \label{divisible_coprime}
Let $\O$ be a complete discrete valuation ring with quotient field $F$ and finite residue field $\kappa$ of characteristic $p$. Then for $n\geq 2$ the groups $\KM_n\O$ and $\iKM_n\O$ are $m$-divisible if $(p,m)=1$.
\end{cor}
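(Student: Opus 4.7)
The plan is to deduce the corollary quickly from Lemma~\ref{mod_m} together with the vanishing of Milnor K-theory of a finite field in degrees $\geq 2$. Recall that an abelian group $A$ is $m$-divisible if and only if $A/m = 0$, so all we need to show is that $(\KM_n\O)/m$ vanishes for $n \geq 2$ when $(p,m)=1$, and then transport this divisibility to $\iKM_n\O$.

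First I would apply Lemma~\ref{mod_m} to the graded component in degree $n$, obtaining an isomorphism $(\KM_n\O)/m \iso (\KM_n\kappa)/m$. Since $\kappa$ is a finite field, Proposition~\ref{iKM_properties}~(v) gives $\KM_n\kappa \cong \iKM_n\kappa$, and part (vi) of the same proposition yields $\iKM_n\kappa = 0$ for $n \geq 2$. Hence $(\KM_n\kappa)/m = 0$ and therefore $(\KM_n\O)/m = 0$, which is exactly the statement that $\KM_n\O$ is $m$-divisible.

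To transfer this to the improved Milnor K-theory, I would invoke Kerz's Theorem~A quoted earlier in the paper, which provides a surjection $\KM_n\O \surj \iKM_n\O$. Since a quotient of an $m$-divisible group is $m$-divisible, it follows immediately that $\iKM_n\O$ is $m$-divisible as well.

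There is no real obstacle here; the content is entirely packaged in Lemma~\ref{mod_m}. The only point worth flagging is the hypothesis $(p,m)=1$, which is used precisely to apply Hensel's Lemma in the proof of Lemma~\ref{mod_m} (the factor $U_1$ is uniquely $m$-divisible when $m$ is a unit modulo $p$); this is what forces the restriction on $m$ in the statement.
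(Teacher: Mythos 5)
Your argument is correct and is essentially identical to the paper's proof: both reduce to $(\KM_n\O)/m \cong (\KM_n\kappa)/m = 0$ via Lemma~\ref{mod_m} and the vanishing of Milnor K-theory of a finite field in degrees $\geq 2$, then pass to $\iKM_n\O$ through the surjection $\KM_n\O \surj \iKM_n\O$. Your citation of Proposition~\ref{iKM_properties}~(v) and (vi) for the vanishing is in fact more accurate than the paper's reference to part (iv).
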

\begin{proof}
For $n\geq 2$ and $m\in\Z$ such that $(p,m)=1$ we have $(\KM_n\O)/m \cong (\KM_n\kappa)/m = 0$, whence $\KM_n\kappa=0$ since $\kappa$ is a finite field (see Proposition~\ref{iKM_properties}~(iv)). Thus $\KM_n\O$ is $m$-divisible. The surjective homomorphism $\KM_n\O \surj \iKM_n\O$ shows the claim for $\iKM_n\O$.
\end{proof}

Next we want to understand multiplication by $p$ on Milnor K-theory. For that we need the following results.

\begin{prop} \label{delta}
Let $\O$ be a complete discrete valuation ring with quotient field $F$ and finite residue field of characteristic~$p$.
	\begin{enumerate}
	\item Let $F$ contain all $p$-th roots of unity. Then $(\iKM_2F)/p$ is cyclic of order $p$.
	\item If $F$ contains only the trivial $p$-th root of unity, $(\iKM_2F)/p$ is zero.
	\end{enumerate}
Here we use tacitly the fact that $\iKM_*F\cong \KM_*F$ from Proposition~2.3~(v).
\end{prop}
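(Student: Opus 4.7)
The plan is to reduce the computation of $(\iKM_2 F)/p$ to an arithmetic statement about roots of unity, via the structure theorem for $K_2$ of a non-archimedean local field due to \df{Moore} and \df{Tate}, as presented in the appendix to \df{Milnor}'s book~\cite{milnor}. Since $\O$ is a complete discrete valuation ring with finite residue field, its quotient field $F$ is a non-archimedean local field, so the theorem is available. First I would use Proposition~\ref{iKM_properties}~(v) to replace $\iKM_2 F$ by the usual Milnor group $\KM_2 F$, reducing the question to computing $(\KM_2 F)/p$.

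Next I would invoke the Moore--Tate theorem: the maximal divisible subgroup $D \subset \KM_2 F$ is uniquely divisible, and the quotient $\KM_2 F/D$ is canonically isomorphic to the (finite cyclic) group $\mu(F)$ of all roots of unity in $F$. Since $D$ is divisible, hence injective as an abelian group, the short exact sequence $0 \to D \to \KM_2 F \to \mu(F) \to 0$ splits, and tensoring with $\Z/p\Z$ yields the identification
\begin{align*}
(\KM_2 F)/p \;\cong\; \mu(F)/p.
\end{align*}

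What remains is elementary group theory in the finite cyclic group $\mu(F)$. Writing $n := |\mu(F)|$, one computes $\mu(F)/p \cong \Z/\gcd(n,p)\Z$. In case~(i), the hypothesis $\mu_p \subset F$ forces $p \mid n$, so this quotient is cyclic of order $p$. In case~(ii), the hypothesis that $F$ contains only the trivial $p$-th root of unity forces $p \nmid n$, so the quotient vanishes.

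The principal substance of the argument lies in the Moore--Tate theorem itself, which I would cite rather than reprove; its proof relies on a careful analysis of the power norm residue (Hilbert) symbol on local fields, and this is the only non-trivial input. This is consistent with the author's stated strategy of drawing structural input about Milnor K-theory of local rings from the same appendix of~\cite{milnor} in the proof of divisibility of $\iKM_n \O$ for $n \geq 3$.
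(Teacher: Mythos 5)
Your argument is correct and follows essentially the same route as the paper, whose proof consists of citing the appendix of Milnor's book \cite{milnor} ([Cor.~A.12] for (i), and [Lem.~A.6] together with [Lem.~A.4] for (ii)); quoting the Moore structure theorem from that same appendix and deducing both cases from it is only a cosmetic difference. The one caveat is that unique divisibility of the maximal divisible subgroup $D$ is neither needed nor what that appendix establishes: plain divisibility already gives $D/p=0$, so the exact sequence $0\to D\to \KM_2F\to\mu(F)\to 0$ yields $(\KM_2F)/p\cong\mu(F)/p$ without invoking a splitting.
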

\begin{proof}
This is done in \cite{milnor}, (i) is [Cor.~A.12] and (ii) follows from [Lem.~A.6] together with [Lem.~A.4].
\end{proof}

To treat a special case in our proof below, we need a norm map for a non-\etale{} extension which is not covered literally by \df{Kerz}' work. However, everything needed is basically already contained in \cite{kerz_gersten_conjecture}.

\begin{prop} \label{norm}
Let $A$ be a local and factorial domain and $\iota\colon A\inj B$ an extension of local rings such that $B\cong A[X]/\skp{\pi}$ for a monic irreducible polynomial $\pi\in A[X]$.
Then there exists a transfer morphism, the \df{norm map},
	\begin{align*}
		N_{B/A} \,\colon\, \iKM_*B \Nach \iKM_*A
	\end{align*}
satisfying the projection formula
	\begin{align} \label{projection_formula}
		N_{B/A}\bigl(\{\iota_*(x),y\}\bigr) = \bigl\{x,N_{B/A}(y)\bigr\}
	\end{align}
for all $x\in\iKM_*A$ and $y\in\iKM_*B$. 
\end{prop}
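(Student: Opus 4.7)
My plan is to construct the norm by passing to fraction fields, invoking the classical Bass--Tate--Kato norm there, and then showing that this field-level map restricts to the integral substructures defined by the local rings. The projection formula will then be inherited from its field-theoretic counterpart.

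First, since $A$ is a local factorial domain and $\pi \in A[X]$ is monic irreducible, Gauss's lemma implies that $\pi$ remains irreducible in $F[X]$, where $F := \Frac(A)$. Consequently $E := \Frac(B) = F[X]/\skp{\pi}$ is a finite field extension of $F$ of degree $d := \deg(\pi)$; by Proposition~\ref{iKM_properties}~(v) we may identify $\iKM_*F$ with $\KM_*F$ and $\iKM_*E$ with $\KM_*E$. The classical Bass--Tate--Kato norm $N_{E/F}\colon \KM_*E \Nach \KM_*F$ (cf.\ \cite[Ch.~7.3]{gille}) is then at our disposal and already satisfies the projection formula $N_{E/F}(\{j_*(x),y\}) = \{x, N_{E/F}(y)\}$ at the field level, where $j\colon F \inj E$.

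Second, to descend $N_{E/F}$ to the integral subgroups, I would adapt Kerz's strategy in \cite{kerz_gersten_conjecture} (carried out there in the \etale{} case). The Bass--Tate norm admits an explicit description as an alternating sum of tame symbols on $\mathbb{P}^1_F$; the closed point cut out by $\pi$ has local ring $A[X]_{\skp{\pi}}$, which is a DVR with residue ring $B$, and the tame symbol at this point carries $\KM_*A[X]_{\skp{\pi}}$ into $\KM_{*-1}B$ (with an analogous compatibility at infinity). Tracing these through the Bass--Tate recipe shows that $N_{E/F}$ maps $\iKM_*B$ into $\iKM_*A$, and I define $N_{B/A}$ to be the resulting restriction. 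The projection formula \eqref{projection_formula} then follows from the field-theoretic version by functoriality of the comparison maps $\iKM_*A \to \iKM_*F$ and $\iKM_*B \to \iKM_*E$, since they are ring homomorphisms intertwining $\iota_*$ with $j_*$.

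The main obstacle is precisely this descent step. Kerz's \etale{} argument in \cite{kerz_gersten_conjecture} uses crucially that the closed fibre of $\Spec(B) \to \Spec(A)$ is reduced, whereas in our (possibly totally ramified) setting it need not be; the core technical work is therefore to adjust his bookkeeping of tame symbols on $\mathbb{P}^1_A$ to accommodate this ramification. Once this compatibility is established, everything else follows formally from the field-theoretic Bass--Tate--Kato machinery.
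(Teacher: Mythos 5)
Your strategy founders at the descent step, and for a reason that is structural rather than merely technical: to ``restrict'' the field-level norm $N_{E/F}\colon \KM_*E \to \KM_*F$ to a map $\iKM_*B \to \iKM_*A$ you must (a) regard $\iKM_*B$ as a subgroup of $\KM_*E$ and (b) lift the value $N_{E/F}(y)$ uniquely from $\KM_*F$ back to $\iKM_*A$. Both require the comparison maps $\iKM_*A \to \KM_*F$ and $\iKM_*B \to \KM_*E$ to be injective, and this injectivity is not known for general local factorial domains. Worse, in the situation where the proposition is actually applied ($A=\O$ a complete discrete valuation ring with finite residue field, $B=\O[\sqrt{-1}]$, and the projection formula applied to a degree-$1$ class times a degree-$2$ class, hence a statement about $\iKM_3$), the injectivity of $\iKM_3\O \to \KM_3F$ is precisely the content of the main theorem, which depends on Theorem~\ref{KM_n_O_divisible} and hence on this norm map. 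Your construction would therefore be circular exactly where it is needed; degree $2$ alone (where injectivity is available from \eqref{gersten_sequence_milnor_2}) does not suffice, since the conclusion drawn from the norm in the proof of Theorem~\ref{KM_n_O_divisible} is the surjectivity of $(\iKM_3\O')/p \to (\iKM_3\O)/p$.

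The paper avoids this by never passing through the fraction field: it invokes Kerz's generalised Bass--Tate sequence \cite[Thm.~4.4]{kerz_gersten_conjecture}, valid for a semi-local factorial domain with \emph{infinite} residue field, in which the group $\K^{\mathrm{t}}_*A$ of feasible tuples replaces $\KM_*F(X)$, and obtains a norm $\KM_*(A[X]/\skp{P}) \to \KM_*A$ directly by the same cokernel construction as in the field case. For arbitrary residue fields one applies this to $A(t)$ and $A(t_1,t_2)$, which have infinite residue fields by Lemma~\ref{A(t)_local_infinite}; the identification $(A[X]/\skp{\pi})(t) \cong A(t)[X]/\skp{\pi}$ from Proposition~\ref{ring_of_fractions_base_change} and the compatibility of the resulting norms with $\delta^{\mathrm{M}}$ then define $N_{B/A}$ on $\iKM_*B = \ker(\delta^{\mathrm{M}})$ by the universal property of the kernel --- here the inclusion $\iKM_*A \inj \KM_*A(t)$ is injective \emph{by definition}, which is exactly what your route lacks. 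If you wish to salvage the spirit of your argument, replace $F$ by $A(t)$ and $E$ by $B(t)$ throughout; your remaining points (linearity of the tame symbols, the projection formula as $\KM_*A$-linearity) then go through essentially as you describe.
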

\begin{proof}
This will be done in the appendix, see \ref{norm_appendix}.
\end{proof}

\begin{prop}[{Bass, Tate, cf.~\cite[Cor.~A.15]{milnor}}] \label{norm_surjective}
Let $\O \subset \O'$ be complete discrete valuation rings with quotient fields $F \subset F'$ and finite residue fields. Then the norm map
	\begin{align*}
		N_{F'/F} \,\colon\, \iKM_2F' \surj \iKM_2F.
	\end{align*}
is surjective.
\end{prop}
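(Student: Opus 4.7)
The plan is to invoke the identification $\iKM_2 = \KM_2$ for fields (Proposition~\ref{iKM_properties}~(v)) so that we work with classical Milnor K-theory, and then proceed by a diagram chase based on the tame symbol, combined with the divisibility results developed earlier in this section.

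Concretely, I would consider the short exact sequences of (\ref{gersten_sequence_milnor_2}) for $\O \subset F$ and for $\O' \subset F'$ and arrange them into a commutative square linked vertically by the norm map of Proposition~\ref{norm}. The right-most column becomes the norm $N_{\kappa'/\kappa} \colon (\kappa')^\times \to \kappa^\times$ on the finite residue fields. Commutativity of the right square, i.e. $\partial \circ N_{F'/F} = N_{\kappa'/\kappa} \circ \partial'$, is the standard compatibility of the tame symbol with norms for Henselian valued fields; it applies here because the unique maximal ideal of $\O'$ lies over the unique maximal ideal of $\O$.

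Since the norm map on the unit groups of finite fields is well-known to be surjective, a short diagram chase (the surjective-on-the-right half of the five lemma) reduces the surjectivity of $N_{F'/F}$ to that of $N \colon \iKM_2 \O' \to \iKM_2 \O$. I would attack this residual claim through the divisibility results of Section~\ref{section_div_of_iKM}. By Corollary~\ref{divisible_coprime}, $\iKM_2 \O$ is $m$-divisible whenever $(p,m) = 1$, so it suffices to prove that the cokernel of $N$ is $p$-divisible as well. Proposition~\ref{delta} is the decisive input: if $\mu_p \not\subset F$, then $(\iKM_2 F)/p = 0$ and the cokernel is trivially $p$-divisible; if $\mu_p \subset F$, one exhibits an explicit preimage of a generator of $(\iKM_2 F)/p$ (typically of the form $\{\zeta_p, \pi\}$ for a primitive $p$-th root of unity $\zeta_p \in F$ and a uniformizer $\pi$) in $\iKM_2 F'$ using the projection formula \eqref{projection_formula} and the fact that $\zeta_p$ already lies in $F \subset F'$.

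The main obstacle lies in the wild case $p \mid [F':F]$ together with $\mu_p \subset F$. Here the naive pullback $\iota^*\{\zeta_p, \pi\} \in \iKM_2 F'$ has norm $[F':F] \cdot \{\zeta_p, \pi\}$, which is zero modulo $p$ and therefore does not produce a generator of the cokernel modulo $p$. Constructing a less obvious preimage in this wild setting is the technical heart of the statement and is usually handled by presenting $F'/F$ as a tower containing a Kummer extension of degree $p$ and exploiting the Hilbert symbol; this is the step at which Milnor's argument in the appendix of \cite{milnor} does the real work, and I expect any alternative proof to confront an analogous wild-ramification difficulty.
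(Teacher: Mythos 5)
First, a point of reference: the paper does not prove this proposition at all --- it is imported from the literature (Bass--Tate; Milnor, Cor.~A.15) and used as a black box, so there is no internal argument to compare yours against. Judged on its own terms, your outline is a reasonable reduction but not a proof. The framing is sound: the compatibility $\partial\circ N_{F'/F}=N_{\kappa'/\kappa}\circ\partial'$ does hold here because the valuation of the complete field $F$ extends uniquely to $F'$, the norm on unit groups of finite fields is surjective, and the snake lemma then reduces the claim to surjectivity of $N\colon\iKM_2\O'\to\iKM_2\O$; passing to $\iKM_2\O$ rather than $\KM_2F$ is a genuinely good move, since $\iKM_2\O$ is $\ell$-divisible for all $\ell\neq p$ by Corollary~\ref{divisible_coprime}, whereas $\KM_2F$ is not.

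Two gaps remain. The smaller one: ``the cokernel of $N$ is divisible'' does not by itself imply $N$ is surjective --- a nonzero divisible quotient is perfectly possible. The inference becomes valid only once you observe that $\coker(N)$ is annihilated by $d:=[F':F]$, which follows from the projection formula via $N(\iota_*(x))=d\cdot x$; you invoke this identity when discussing the wild case but never draw the consequence that $\coker(N)=d\cdot\coker(N)=0$ once divisibility is known. This is fixable in one line. The decisive gap is that you explicitly do not handle the only case in which the statement has content: $p\mid d$ together with $\mu_p\subset F$, where $(\iKM_2\O)/p\cong\Z/p$ and one must exhibit an element of $\iKM_2F'$ whose norm is nontrivial in this quotient. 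Your candidate $\iota^*\{\zeta_p,\pi\}$ has norm $d\cdot\{\zeta_p,\pi\}\equiv 0 \pmod p$, as you note, and the replacement you gesture at (a tower through a Kummer subextension plus the Hilbert symbol) is exactly the nondegeneracy of the norm residue symbol, i.e.\ the content of Lemmas A.6--A.14 of Milnor's appendix that Cor.~A.15 rests on. Deferring precisely that step to the reference being cited means the proposal reduces the proposition to itself rather than proving it.
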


With this we can prove the missing $p$-divisibility.

\begin{thm} \label{KM_n_O_divisible}
Let $\O$ be a complete discrete valuation ring with quotient field $F$, finite residue field $\kappa$ of characteristic $p$, and $n\geq 3$. Then $\iKM_n\O$ is divisible.
\end{thm}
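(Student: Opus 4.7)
The plan is to prove $(\iKM_n\O)/p = 0$ for the residue characteristic $p$; combined with Corollary~\ref{divisible_coprime}, this gives full divisibility of $\iKM_n\O$. By the product structure $\iKM_1\O \cdot \iKM_{n-1}\O \surj \iKM_n\O$, $p$-divisibility of $\iKM_{n-1}\O$ forces that of $\iKM_n\O$ (since $\{u\}\cdot p\gamma = p(\{u\}\cdot\gamma)$), so by induction it suffices to treat $\iKM_3\O$.

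For $n = 3$, the map $\iota_*\colon \iKM_3\O \to \iKM_3F$ is surjective by Proposition~\ref{tame_symol_exact_sequence} combined with $\iKM_2\kappa = 0$ (Proposition~\ref{iKM_properties}(vi) for the finite residue field), and $\iKM_3F$ is uniquely divisible by Sivitskii. Consequently, $\iKM_3\O$ is $p$-divisible if and only if its kernel $K_3 := \ker(\iota_*)$ is. To analyse $K_3$, I invoke the naturality square of the comparison map of Proposition~\ref{MQM}, with vertical maps $\iota_*$ and horizontal maps $\PhiMQ$, together with the Gersten injection $\K_3\O \hookrightarrow \K_3F$ of Proposition~\ref{gersten_conjecture}: a diagram chase yields $K_3 \subseteq \ker(\PhiMQ)$. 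Since $\PhiQM \circ \PhiMQ = \chi_3 \cdot \id = 2 \cdot \id$ on $\iKM_3\O$, it follows that $K_3$ is annihilated by~$2$.

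For $p$ odd, $\gcd(2,p) = 1$ implies that any $2$-torsion group is automatically $p$-divisible (write $1 = 2a + pb$ and observe $k = p(ak)$ for $k \in K_3$), so $K_3$ is $p$-divisible and hence so is $\iKM_3\O$.

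The main obstacle is the case $p = 2$, where the conclusion $2 K_3 = 0$ does not suffice: one has no Bezout to conclude $p$-divisibility from $\chi_3$-torsion. Here I would argue directly that $(\iKM_3\O)/2 = 0$, using Proposition~\ref{delta}(i) to identify $(\iKM_2\O)/2 \cong \Z/2$ together with a $2$-torsion generator $\tilde s$ (lifted via the Gersten injection for $\iKM_2$ from a generator of $(\iKM_2F)/2$), the product structure to see that $(\iKM_3\O)/2$ is generated by the elements $\tilde s \cdot \{v\}$ for $v \in \O^\times$, and a combination of the norm surjectivity of Proposition~\ref{norm_surjective} with the Hilbert symbol description of $\tilde s$ (possibly after adjoining $\zeta_4$) to exhibit each such generator as a $2$-multiple in $\iKM_3\O$.
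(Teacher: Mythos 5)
Your reduction to $n=3$, and your handling of the primes $\ell\neq p$ via Corollary~\ref{divisible_coprime}, coincide with the paper's. For $p$ odd your argument is a genuinely different route and it is correct: the sequence $0\to K_3\to\iKM_3\O\to\iKM_3F\to 0$ is exact (surjectivity from Proposition~\ref{tame_symol_exact_sequence} and $\iKM_2\kappa=0$; unique divisibility of $\iKM_3F$ from Theorem~\ref{KMF_uniquely_divisible}), so $(\iKM_3\O)/p\cong K_3/p$; the naturality square for $\PhiMQ$ together with the injectivity of $\K_3\O\to\K_3F$ from Proposition~\ref{gersten_conjecture} puts $K_3$ inside $\ker(\PhiMQ(\O))$, whence $\chi_3K_3=2K_3=0$ by Proposition~\ref{MQM}; and a $2$-torsion group is $p$-divisible for odd $p$. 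This is essentially the simplification the paper itself points out in the Remark after Theorem~\ref{thm_main_result}. The paper's own proof instead writes each generator of $(\iKM_3\O)/p$ as $k[u,u,x]=k[-1,u,x]=kp[\sqrt[p]{-1},u,x]=0$, which avoids any input from algebraic K-theory but needs $\sqrt[p]{-1}\in F$; the two approaches trade the $K$-theoretic comparison maps against the roots-of-unity trick.

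The gap is the case $p=2$ in mixed characteristic (i.e.\ $F/\Q_2$ finite with $\sqrt{-1}\notin F$), which is exactly where the paper's real work lies, and your last sentence is a plan rather than an argument. Two things are missing. First, you do not say why passing to $\O'=\O[\zeta_4]$ kills $(\iKM_3\O')/2$; this is the identity $[u,u,x]=[-1,u,x]=2[\zeta_4,u,x]$, i.e.\ the same trick as in the odd case. Second, and more seriously, to bring the vanishing over $\O'$ back down to $\O$ you need a norm map $N_{\O'/\O}\colon\iKM_*\O'\to\iKM_*\O$ on the level of the \emph{rings}, satisfying the projection formula and compatible with the field-level norm whose surjectivity on $\iKM_2$ you quote from Proposition~\ref{norm_surjective}. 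The extension $\O\inj\O[\sqrt{-1}]$ is in general not \etale{} (for instance $2$ ramifies in $\Q_2(\sqrt{-1})$), so Kerz's norm for finite \etale{} extensions does not apply off the shelf; constructing this norm with the projection formula for such extensions is precisely Proposition~\ref{norm}, and its proof occupies an appendix of the paper. The ``Hilbert symbol description of $\tilde s$'' you invoke is not a substitute: knowing which symbol generates $(\iKM_2\O)/2$ does not by itself exhibit $\tilde s\cdot\{v\}$ as $2\beta$ with $\beta\in\iKM_3\O$. As written, the $p=2$ case is not proved.
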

\begin{proof} 
The homomorphism
	\begin{align*}
		\KM_3\O \otimes \KM_{n-3}\O &\Nach \KM_n\O \\
		\{x_1,x_2,x_3\}\otimes\{x_4,\ldots,x_n\} &\abb \{x_1,\ldots,x_n\}
	\end{align*}
is surjective and for every $n\geq 0$ the homomorphism $\KM_n\O \nach \iKM_n\O$ is also surjective by Proposition \ref{iKM_properties}~(8). By the commutative diagram
		\[ \begin{xy} 
		\xymatrix{
		\KM_3\O \otimes_\Z \KM_{n-3}\O \ar@{->>}[r] \ar@{->>}[d]
		& \KM_n\O \ar@{->>}[d]
		\\
		\iKM_3\O \otimes_\Z \iKM_{n-3}\O \ar[r]
		& \iKM_n\O,
		}  
		\end{xy} \]
we see that the lower map is also surjective and hence its target is divisible if $\iKM_3\O$ is divisible. Thus it suffices to show that multiplication with $\ell$ on $\iKM_3\O$ is surjective for every prime number $\ell$. Equivalently, we can show that $(\iKM_3\O)/\ell=0$ for every prime $\ell$. The case $\ell\neq p$ is the assertion of Corollary~\ref{divisible_coprime}. All in all, it remains to show that $(\iKM_3\O)/p=0$.

\vquad\noindent According to Proposition~\ref{gersten_conjecture} we have an exact sequence
	\begin{align*} 
	0 \Nach \iKM_2\O \Nach[\iota_*] \iKM_2F \Nach[\iMtame] \iKM_1\kappa \Nach 0.
	\end{align*}
Tensoring with $\Z/p$ leads to an exact sequence
	\begin{align*}
	\underbrace{\Tor_1^\Z(\Z/p,\iKM_1\kappa)}_{=0} \Nach (\iKM_2\O)/p \Nach (\iKM_2F)/p \Nach \underbrace{\iKM_1\kappa/p}_{=0},
	\end{align*}
as  $\iKM_1\kappa \cong \kappa^\times$ is a finite cyclic group with order prime to $p$.
By Proposition~\ref{delta}, we get accordingly 
	\begin{align} \label{identification}
		(\iKM_2\O)/p \cong (\iKM_2F)/p \cong 
		\begin{cases} \Z /p & \text{(if }F\text{ contains the }p\text{-th roots of unity)}
		 \\ 0& \text{(else)} \end{cases}
	\end{align}
If this is zero, we are done. Thus we consider only the other case.

\vquad\noindent Now we suppose for a while that $-1$ has a p-th root $\sqrt[p]{-1}$ in $F$. In fact, this is always the case for $p\neq 2$ or if $F$ is a finite extension of $\F_2((t))$. Let $\{u,v,w\}\in\iKM_3\O$ and let squared brackets denote residue classes modulo $p$. If $[u,x]=0$ in $(\iKM_2\O)/p\cong\Z/p$ for all $x\in\O^\times$, then
	\begin{align*}
		[u,v,w] = [u,v]\cdot[w] = 0 \in (\iKM_3\O)/p.
	\end{align*}
Otherwise, there is an $x\in\O^\times$ such that $[u,x]$ is a generator of $(\iKM_2\O)/p$. Let $[v,w]=k\cdot[u,x]$ for a suitable $k\in\Z$. Then
	\begin{align*}
		[u,v,w] = k\cdot[u,u,x] = k\cdot[-1,u,x] = kp\cdot[\sqrt[p]{-1},u,x] = 0 \in (\iKM_3\O)/p.
	\end{align*}	
	
\vquad\noindent If, in contrast, $-1$ does not have a $p$-th root in $F$, we have $p=2$ and $F$ is a finite extension of $\Q_2$. Thus $\O$ is a local and factorial domain. We consider the finite field extension $F' := F[\sqrt{-1}]$. By \cite[ch.~1, \S 6~(ii)]{serre}, the valuation ring of $F'$ is $\O':=\O[\sqrt{-1}]$. This allows us to apply Proposition~\ref{norm}.

Due to finiteness, $F'$ is also complete. The norm map $N_{F'/F} \colon \iKM_2F'\to \iKM_2F$ is surjective due to Proposition~\ref{norm_surjective} and induces a surjective map $\tilde{N}_{F'/F} \colon (\iKM_2F')/p \surj (\iKM_2F)/p$. This induces a surjective map 
	\begin{align*}
		\tilde{N}_{\O'/\O} \, \colon \,(\iKM_2\O')/p \surj (\iKM_2\O)/p
	\end{align*} 
on the Milnor K-theory of their integers via the identification \eqref{identification}. Let $[x,y,z]\in(\iKM_3\O)/p$. Decompose $[x,y,z] = [x]\cdot[y,z]$ such that $[x]\in(\iKM_1\O)/p$ and $[y,z]\in(\iKM_2\O)/p$. We find a preimage $\alpha\in\iKM_2\O'$ of $[y,z]$ under $\tilde{N}_{\O'/\O}$. The projection formula \eqref{projection_formula} yields 
 	\begin{align*}
		\tilde{N}_{\O'/\O}\bigl(\iota_*([x])\cdot \alpha\bigr) = [x]\cdot \tilde{N}_{\O'/\O}(\alpha) = [x]\cdot[y,z]=[x,y,z],
	\end{align*}
where $\iota\colon \O\inj\O'$ is the inclusion. Thus $(\iKM_3\O')/p \to (\iKM_3\O)/p$ is onto. By the previous argument, the domain of this map is zero, hence its target, too. This concludes the proof.
\end{proof}

		\section{The isomorphism $\iKM_n\O\iso\iKM_n F$ for $n\geq 3$}
		\label{section_main_result}

Our last ingredient is the following result by \df{Sivitskii}:

\begin{thm}[{\cite[p.~562]{sivitskii}}] \label{KMF_uniquely_divisible}
Let $\O$ be a complete discrete valuation ring with quotient field $F$ and finite residue field, and let $n\geq 3$. Then $\KM_nF$ is an uncountable, uniquely divisible group.
\end{thm}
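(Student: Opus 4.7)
Since \emph{uniquely divisible} means divisible and torsion-free, my plan is to split the theorem into three claims and carry them out in order: (a) $\KM_nF$ is divisible, (b) $\KM_nF$ is torsion-free, and (c) $\KM_nF$ is uncountable. Step (a) will follow essentially for free from what is already in Section~\ref{section_div_of_iKM}; step (b) is the main obstacle and requires genuinely new input beyond the divisibility of the ring-level K-groups; step (c) is then a comparatively soft size argument.

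For (a), note that $\KM_{n-1}\kappa=0$ by Proposition~\ref{iKM_properties}(v)--(vi), since $\kappa$ is finite and $n-1\geq 2$, so the sequence of Proposition~\ref{tame_symol_exact_sequence} degenerates to a surjection $\KM_n\O \surj \KM_nF$. For each prime $\ell\neq p$, Corollary~\ref{divisible_coprime} gives $\ell$-divisibility of $\KM_n\O$ and hence of $\KM_nF$. For $\ell=p$, Theorem~\ref{KM_n_O_divisible} gives divisibility of $\iKM_n\O$, and combining the surjection $\KM_n\O \surj \iKM_n\O$ of \cite[Theorem~A]{kerz} with the isomorphism $\KM_nF \iso \iKM_nF$ from Proposition~\ref{iKM_properties}(v) transports this to $p$-divisibility of $\KM_nF$.

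Step (b) is the crux: divisibility alone does not preclude $\Q/\Z$-type summands, so extra input is unavoidable. The cleanest modern route I would try is the Bloch--Kato/norm-residue isomorphism $\KM_nF/\ell \iso H^n(F,\mu_\ell^{\otimes n})$ (Voevodsky--Rost) combined with the fact that a local field has (virtual) cohomological dimension at most $2$; this recovers (a) uniformly and, via a Bockstein argument on $0\to\mu_\ell^{\otimes n}\to\mu_{\ell^2}^{\otimes n}\to\mu_\ell^{\otimes n}\to 0$, forces $\ell$-torsion of $\KM_nF$ to vanish for $n\geq 3$. An alternative closer in spirit to the tools of the paper, and presumably to Sivitskii's original argument, would use the decomposition $F^\times \cong \Z\times\mu\times U_1$, Moore's description of torsion in $\KM_2F$, and the norm together with the projection formula of Proposition~\ref{norm} (as in the final step of the proof of Theorem~\ref{KM_n_O_divisible}) to reduce to the case $\mu_\ell\subset F$, followed by explicit symbol manipulations built on Steinberg relations to annihilate any putative torsion in degree $n\geq 3$. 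Finally for (c), once (a) and (b) make $\KM_nF$ a $\Q$-vector space, uncountability reduces to exhibiting uncountably many $\Q$-linearly independent symbols; since $F$, and hence $\O^\times$, is uncountable, varying one slot of $\{-,u_2,\ldots,u_n\}$ over a suitable uncountable family and detecting independence by iterated tame residues (or pairings into differential-form-type invariants in the equal-characteristic case) produces them.
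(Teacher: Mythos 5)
First, for calibration: the paper does not actually prove Theorem~\ref{KMF_uniquely_divisible}; it quotes it from Sivitskii and attributes the three ingredients to the literature (uncountability to Bass--Tate, divisibility to Milnor, torsion-freeness to Sivitskii's explicit computations or, structurally, to the Norm Residue Theorem via \cite[VI.~Prop.~7.1]{weibel}). Your step (a) is correct and is a genuine, non-circular argument internal to the paper: $\KM_{n-1}\kappa=0$ forces $V_n=\KM_nF$ in Proposition~\ref{tame_symol_exact_sequence}, hence $\KM_n\O\surj\KM_nF$, and Corollary~\ref{divisible_coprime} together with Theorem~\ref{KM_n_O_divisible} (whose proof does not use Theorem~\ref{KMF_uniquely_divisible}) transports divisibility to $\iKM_nF\cong\KM_nF$. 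This is heavier than Milnor's direct field-level argument that the paper cites, but it works.

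Steps (b) and (c) contain genuine gaps. For (b), the Bockstein argument you sketch does not yield torsion-freeness: the norm residue isomorphism plus $\mathrm{cd}(F)\leq 2$ gives $\KM_nF/\ell=0$ for $n\geq 3$, which is divisibility again, and says nothing about ${}_{\ell}\KM_nF$ --- the group $\Q/\Z$ is divisible and entirely torsion, so vanishing of $A/\ell$ and vanishing of ${}_\ell A$ are independent conditions. What is actually needed is the much stronger consequence of the Norm Residue Theorem describing the torsion subgroup itself, namely ${}_\ell\KM_nF=\{\zeta_\ell\}\cdot\KM_{n-1}F$ (the higher analogue of the Merkurjev--Suslin theorem on torsion in $\K_2$; this is the substance of the reference \cite[VI.~Prop.~7.1]{weibel} that the paper points to), after which one must still show that $\{\zeta_\ell\}\cdot\KM_{n-1}F$ vanishes for a local field and $n\geq 3$ --- already for $n=3$ this means killing $\{\zeta_\ell\}\cdot\alpha$ for $\alpha$ a generator of $(\KM_2F)/\ell$, a genuine computation; and in equal characteristic the case $\ell=p$ is not covered by $\mu_p$-coefficients at all and requires Bloch--Kato--Gabber. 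Your ``alternative'' route (``explicit symbol manipulations \ldots to annihilate any putative torsion'') is a restatement of the goal, not an argument. For (c), detecting independence ``by iterated tame residues'' cannot work on a local field: the only available residue lands in $\KM_{*-1}\kappa$ with $\kappa$ finite, which vanishes in degrees $\geq 2$, so every element of $\KM_nF$ with $n\geq 3$ has zero residue --- that invisibility is precisely what makes the theorem hard. The Bass--Tate uncountability proof is of a different nature (a specialization/transcendence-degree argument). If your intention is to cite these facts, as the paper does, you should say so explicitly; as written, neither of the two hard parts is closed.
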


The uncountability was found by \df{Bass} and \df{Tate} \cite{bass_tate} and divisibility was shown by \df{Milnor} \cite[Ex.~1.7]{milnor_k}. The torsion-freeness was originally proved by \df{Sivitskii} \cite{sivitskii} with explicit calculations; there is also a more structural proof using the Norm Residue Theorem which was formerly known as the Bloch-Kato conjecture and which was proved by \df{Rost} and \df{Voevodsky}, for a proof consider \cite[VI.~Prop.~7.1]{weibel}.
Now we are able to prove our main result.

\begin{thm} \label{thm_main_result}
Let $\O$ be a complete discrete valuation ring with quotient field $F$ and finite residue field, and let $n\geq 3$. Then the inclusion $\iota\,\colon\,\O\inj F$ induces an isomorphism
	\begin{align*}
		\iota_* \,\colon \, \iKM_n\O \iso \iKM_n F
	\end{align*}
on improved Milnor K-theory.
\end{thm}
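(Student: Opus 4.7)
The plan is to get surjectivity from the Gersten-type right-exact sequence of Proposition~\ref{tame_symol_exact_sequence} and to obtain injectivity by trapping the kernel as a divisible group that is also annihilated by a nonzero integer, combining the two divisibility theorems of Sections~\ref{section_div_of_iKM}--\ref{section_main_result} with the comparison to algebraic K-theory.

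For surjectivity, Proposition~\ref{tame_symol_exact_sequence} yields the exact sequence
\[ \iKM_n\O \Nach[\iota_*] \iKM_nF \Nach[\iMtame] \iKM_{n-1}\kappa \Nach 0. \]
Since $n\geq 3$ we have $n-1\geq 2$, and $\kappa$ being finite forces $\iKM_{n-1}\kappa = 0$ by Proposition~\ref{iKM_properties}~(vi), so $\iota_*$ is surjective.

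For injectivity I will set $K := \ker(\iota_*)$ and show $K=0$ by establishing two properties. First, $K$ will be divisible: given $x\in K$ and $m\geq 1$, the divisibility of $\iKM_n\O$ (Theorem~\ref{KM_n_O_divisible}) produces $y\in\iKM_n\O$ with $my=x$, and then $m\iota_*(y) = \iota_*(x) = 0$ together with the torsion-freeness of $\iKM_nF$ (Theorem~\ref{KMF_uniquely_divisible}) forces $\iota_*(y)=0$, so $y\in K$. Second, $K$ will be $\chi_n$-torsion: naturality of the comparison map $\PhiMQ$ from Proposition~\ref{MQM} yields a commutative square linking $\iota_*$ on improved Milnor and on algebraic K-theory, so for $x\in K$ the class $\PhiMQ(x)\in\K_n\O$ vanishes in $\K_nF$; since $\K_n\O\hookrightarrow \K_nF$ is injective by the algebraic K-theory Gersten conjecture (Proposition~\ref{gersten_conjecture}), one gets $\PhiMQ(x)=0$, and then Proposition~\ref{MQM} gives $\chi_n\cdot x = \PhiQM(\PhiMQ(x)) = 0$.

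A divisible group annihilated by the nonzero integer $\chi_n = (-1)^{n-1}(n-1)!$ is automatically trivial: any $x\in K$ admits some $z\in K$ with $\chi_n z = x$, forcing $x=0$. Hence $K=0$ and $\iota_*$ is an isomorphism. The substantive content is already packaged into the two divisibility theorems together with Proposition~\ref{gersten_conjecture}, so after assembling the ingredients the argument is a short diagram chase; the one delicate point is that torsion-freeness of $\iKM_nF$ is essential \emph{twice}---once to propagate divisibility from $\iKM_n\O$ into $K$, and once to make the $\chi_n$-torsion conclusion bite---so without Sivitskii's theorem the strategy would not close.
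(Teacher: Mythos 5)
Your proposal is correct and follows essentially the same route as the paper: surjectivity from the right-exact tame-symbol sequence and vanishing of $\iKM_{n-1}\kappa$, and injectivity by writing a kernel element as $\chi_n$ times another kernel element (using divisibility of $\iKM_n\O$ and torsion-freeness of $\iKM_nF$) and then killing it via $\PhiQM\circ\PhiMQ=\chi_n$ together with the injectivity of $\K_n\O\inj\K_nF$. The only quibble is your closing remark: torsion-freeness of $\iKM_nF$ is used once (to propagate divisibility into the kernel), while the $\chi_n$-torsion step rests on Propositions~\ref{gersten_conjecture} and~\ref{MQM} alone.
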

\begin{proof}
\noindent\textbf{Surjectivity.}
Let $\kappa$ be the residue field of $\O$. According to Proposition~\ref{tame_symol_exact_sequence} we have an exact sequence
	\begin{align*}
		\iKM_n\O \Nach[\iota_*] \iKM_nF \Nach[\iMtame] \iKM_{n-1}\kappa \Nach 0.
	\end{align*}
Because $\kappa$ is a finite field and $n-1\geq 2$, we see that $\iKM_{n-1}\kappa \cong \KM_{n-1}\kappa = 0$. Thus $\iota_*$ is surjective.

\vquad\noindent\textbf{Injectivity.}
Consider the commutative diagram 
	\begin{align} \label{main_result_diagram}
	\begin{xy} 
	\xymatrix{
		\iKM_n\O \ar[rr]^{\iota_*} \ar[d]^{\PhiMQ(\O)} \ar@(dl,ul)[dd]_{\chi_n} 
		&\hspace{0cm}
		& \iKM_nF \ar@{^{(}->}[d]_{\PhiMQ(F)} \ar@(dr,ur)[dd]^{\chi_n}
		\\
		\K_n\O \ar@{^{(}->}[rr]^{\K_n(\iota)} \ar[d]^{\PhiQM(\O)} 
		&& \K_nF \ar[d]_{\PhiQM(F)} 
		\\
		\iKM_n\O \ar[rr]^{\iota_*}
		&& \iKM_nF,		
	}
	\end{xy}
	\end{align}
where $\K_n(\iota)$ is injective due to Proposition~\ref{gersten_conjecture}.
Let $\alpha\in\ker(\iota_*)$. As $\iKM_n\O$ is divisible we find $\beta\in \iKM_n\O$ such that $\alpha=\chi_n \beta$ where $\chi_n = (-1)^{n-1} (n-1)!$. Hence
	\begin{align*}
	0 = \iota_*(\alpha) = \iota_*(\chi_n \beta) = \chi_n\iota_*(\beta).
	\end{align*}
We have $\beta\in\ker(\iota_*)$ because $\iKM_nF \cong \KM_nF$ is uniquely divisible by Theorem~\ref{KMF_uniquely_divisible}. Since $\K_n(\iota)$ is injective, we also have $\ker(\iota_*)\subseteq\ker(\PhiMQ(\O))$ by diagram chasing and the latter one is clearly a subset of $\ker(\chi_n)$. We conclude that $\alpha = \chi_n \beta = 0$. So $\iota_*$ is injective, hence an isomorphism.
\end{proof}

\begin{remark}
In fact, one could simplify this proof using only $2$-divisibility for $\iKM_3\O$. With that one could proof the theorem first for $n=3$ and gets entire divisibility of $\iKM_3\O$ for free by Theorem~\ref{KMF_uniquely_divisible}. Then, one could do the proof for higher $n$. In characteristic $p\neq 2$, this would spare the proof of Theorem~\ref{KM_n_O_divisible}.
\end{remark}

Immediately from Theorem~\ref{KMF_uniquely_divisible} (respectively its proof) we obtain the following.

\begin{cor}
Let $\O$ be a complete discrete valuation ring with finite residue field. Then:
\begin{enumerate}
	\item The canonical map $\iKM_*\O \to \K_*\O$ is injective.
 	\item $\iKM_n\O$ is uniquely divisible for $n\geq 3$.
\end{enumerate}
\end{cor}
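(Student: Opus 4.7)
The corollary should be a short consolidation of what has just been proved, so the plan is to treat the two parts in sequence, leaning on results already on the table rather than introducing new machinery.

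For part (ii), the plan is essentially immediate. Theorem~\ref{thm_main_result} supplies the isomorphism $\iota_*\colon \iKM_n\O \iso \iKM_nF$ for $n\geq 3$, and Theorem~\ref{KMF_uniquely_divisible} says the right-hand side is an uncountable, uniquely divisible abelian group. Transporting unique divisibility across the isomorphism finishes the argument without any further computation.

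For part (i), I would split by degree. In degrees $n=0,1,2$ the statement is classical and needs no new input: $\iKM_0\O \to \K_0\O$ is $\Z \to \Z$; $\iKM_1\O \cong \O^\times \to \K_1\O$ is injective because it is split by the determinant (as $\O$ is a DVR); and $\iKM_2\O \to \K_2\O$ is an isomorphism by Proposition~\ref{iKM_properties}~(iv). For $n\geq 3$ the idea is to reuse the commutative diagram~\eqref{main_result_diagram} from the proof of Theorem~\ref{thm_main_result}. There, the top arrow $\iota_*\colon \iKM_n\O \to \iKM_nF$ is now known to be an \emph{iso}morphism, and $\K_n(\iota)\colon \K_n\O \to \K_nF$ is injective by Proposition~\ref{gersten_conjecture}. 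The remaining piece is that $\PhiMQ(F)$ is injective for $n\geq 3$: since $\iKM_nF$ is uniquely divisible, multiplication by $\chi_n=(-1)^{n-1}(n-1)!$ is injective on it, and the factorisation $\chi_n = \PhiQM(F)\circ \PhiMQ(F)$ from Proposition~\ref{MQM} forces $\PhiMQ(F)$ itself to be injective. A one-line chase then yields injectivity of $\PhiMQ(\O) = \iKM_n\O \to \K_n\O$.

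There is no genuine obstacle here; the corollary is a packaging statement once Theorem~\ref{thm_main_result} and Theorem~\ref{KMF_uniquely_divisible} are in hand. The only point that deserves a second of thought is injectivity of $\PhiMQ(F)$ in degrees $n\geq 3$, and this is forced by torsion-freeness of $\iKM_nF$ together with the $\chi_n$-factorisation that has already been used repeatedly above.
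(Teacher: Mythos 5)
Your proof is correct and follows essentially the route the paper intends: the paper gives no explicit proof, merely noting that the corollary follows from Theorem~\ref{thm_main_result} and its proof, where part (ii) is exactly your transport of unique divisibility along $\iota_*$ and part (i) is exactly your chase in diagram~\eqref{main_result_diagram} using the injectivity of $\PhiMQ(F)$ (which the paper already records there as a hooked arrow, justified as you do via the $\chi_n$-factorisation and torsion-freeness of $\iKM_nF$). Your handling of the low degrees $n\le 2$ is also fine, though one could get it even more cheaply from Proposition~\ref{MQM} since $|\chi_n|=1$ for $n\le 2$.
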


Furthermore, we are now able to generalise the exact sequence \eqref{gersten_sequence_milnor_2}.

\begin{cor} \label{cor_gersten_conjecture}
Let $\O$ be a complete discrete valuation ring with quotient field $F$ and finite residue field $\kappa$. Then we have for each $n\geq 1$ an exact sequence
	\begin{align*}
		0 \Nach \iKM_n\O \Nach[\iota_*] \iKM_nF \Nach \iKM_{n-1}\kappa \Nach 0.
	\end{align*}
\end{cor}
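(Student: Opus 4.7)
The plan is to split into three cases according to the value of $n$, using the already-established results.

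For $n \geq 3$, combine Theorem~\ref{thm_main_result} with Proposition~\ref{iKM_properties}~(vi). Since $\kappa$ is a finite field and $n-1 \geq 2$, part (vi) of Proposition~\ref{iKM_properties} gives $\iKM_{n-1}\kappa = 0$, so the sequence collapses to the isomorphism $\iota_* : \iKM_n\O \iso \iKM_nF$ produced by Theorem~\ref{thm_main_result}. Right-exactness at $\iKM_{n-1}\kappa$ is already encoded in Proposition~\ref{tame_symol_exact_sequence}, and exactness in the middle is automatic once the tame symbol target vanishes.

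For $n = 2$, invoke the exact sequence \eqref{gersten_sequence_milnor_2} that appears in the body of the paper. This sequence was obtained from the Gersten sequence for algebraic K-theory (Proposition~\ref{gersten_conjecture}) by using the Nesterenko--Suslin isomorphism $\iKM_2 A \cong \K_2 A$ of Proposition~\ref{iKM_properties}~(iv) in the two leftmost spots, and the identification $\iKM_1\kappa \cong \kappa^\times \cong \K_1\kappa$ on the right. So nothing new is required here.

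For $n = 1$, unwind the definitions: Proposition~\ref{iKM_properties}~(i) gives $\iKM_1\O \cong \O^\times$ and $\iKM_1 F \cong F^\times$, while $\iKM_0\kappa = \Z$ by the general computation noted in Definition~\ref{def_rational_functions}. Under these identifications, the map $\iMtame$ becomes the discrete valuation $v \colon F^\times \surj \Z$ (as recorded in Theorem~\ref{tame_symbol}), whose kernel is exactly $\O^\times$. Thus the sequence is the standard short exact sequence
\begin{equation*}
0 \Nach \O^\times \Nach F^\times \Nach[v] \Z \Nach 0,
\end{equation*}
which is obviously exact.

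No step is a real obstacle: the genuine content has been concentrated in Theorem~\ref{thm_main_result} and in Proposition~\ref{gersten_conjecture}. The only thing to watch is that the map $\iMtame$ in Proposition~\ref{tame_symol_exact_sequence} agrees with the algebraic K-theory boundary used in sequence \eqref{gersten_sequence_milnor_2} for $n=2$, but this compatibility is built into the commutative diagram appearing in the proof of Proposition~\ref{tame_symol_exact_sequence} together with Proposition~\ref{iKM_properties}~(iv).
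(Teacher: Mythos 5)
Your proposal is correct and follows exactly the route the paper intends (the paper states this corollary without an explicit proof, but the surrounding text makes clear it is meant to follow from Theorem~\ref{thm_main_result} for $n\geq 3$, from the sequence \eqref{gersten_sequence_milnor_2} for $n=2$, and from the elementary valuation sequence for $n=1$). Your side remark on the $n=2$ case is handled cleanly anyway, since Proposition~\ref{tame_symol_exact_sequence} already gives exactness at $\iKM_2F$ and at $\iKM_1\kappa$ for the tame symbol, so only injectivity of $\iota_*$ needs to be imported from the $\K_2$-identification.
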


\appendix
\section{The ring of rational functions}

The ring of rational functions plays a crucial role for the definition of improved Milnor K-theory. This section's content relies on \df{Kerz}'s paper \cite{kerz} and elaborates some of the proofs. We start by recalling Definition~\ref{def_rational_functions} from above.

\begin{defin} \label{def_ring_rational_functions_appendix}
Let $A$ be a commutative ring and $n \in \N$. The subset
		\begin{align*}
		S := \bigl\{ \sum_{\underline{i}\in \N^n} a_{\underline{i}}\cdot \underline{t}^{\underline{i}} \in A[t_1,\ldots,t_n] \,\Sd\, \skp{a_{\underline{i}}\sd\underline{i}\in \N^n} = A \bigr\}
		\end{align*}
is multiplicatively closed, where $\underline{t}^{\underline{i}} = t_1^{i_1}\cdot\ldots\cdot t_n^{i_n}$. Define the \df{ring of rational functions} (in $n$ variables) to be
	\begin{align*}
	A(t_1,\ldots,t_n) := S\inv A[t_1,\ldots,t_n].
	\end{align*}
We obtain maps $\iota \colon A \nach A(t)$ and $\iota_1,\iota_2 \colon A(t) \nach A(t_1,t_2)$ by mapping $t$ respectively to $t_1$ or $t_2$.
\end{defin}

\begin{remark}
For a field $F$ and $n\geq 1$ we have $F(t_1,\ldots,t_n) = \Frac(F[t_1,\ldots,t_n])$.
\end{remark}

\begin{lemma}[{cf. \cite[p.~6]{kerz}}] \label{A(t)_local_infinite}
Let $(A,\mathfrak{m},\kappa)$ be a local ring and $n\geq 1$. Then the ring $A(t_1,\ldots,t_n)$ is a local ring with maximal ideal $\mathfrak{m} A(t_1,\ldots,t_n)$ and infinite residue field $\kappa(t_1,\ldots,t_n)$.
\end{lemma}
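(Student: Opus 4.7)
The plan is to identify $S$ with the complement of a well-chosen prime ideal so that $A(t_1,\ldots,t_n)$ becomes a textbook localization.

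First I would observe that since $(A,\maxm,\kappa)$ is local, an element $a \in A$ is a unit if and only if $a \notin \maxm$, and a family $\{a_{\underline i}\}$ generates $A$ as an ideal if and only if at least one $a_{\underline i}$ is a unit, i.e.\ at least one $a_{\underline i}$ lies outside $\maxm$. Consequently the multiplicative set $S$ is precisely
	\begin{align*}
	S = A[t_1,\ldots,t_n] \setminus \maxm A[t_1,\ldots,t_n],
	\end{align*}
where $\maxm A[t_1,\ldots,t_n]$ denotes the extension of $\maxm$ to the polynomial ring, i.e.\ the polynomials all of whose coefficients lie in $\maxm$. The second step is then to verify that $\maxm A[t_1,\ldots,t_n]$ is a prime ideal of $A[t_1,\ldots,t_n]$; this is clear from the isomorphism
	\begin{align*}
	A[t_1,\ldots,t_n]/\maxm A[t_1,\ldots,t_n] \cong \kappa[t_1,\ldots,t_n],
	\end{align*}
whose right-hand side is an integral domain.

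Now $A(t_1,\ldots,t_n) = S\inv A[t_1,\ldots,t_n]$ is the localization of $A[t_1,\ldots,t_n]$ at the prime ideal $\maxm A[t_1,\ldots,t_n]$. By the standard description of localization at a prime, this is a local ring whose maximal ideal is the extension of $\maxm A[t_1,\ldots,t_n]$, which is precisely $\maxm A(t_1,\ldots,t_n)$. The residue field is the fraction field of the domain $A[t_1,\ldots,t_n]/\maxm A[t_1,\ldots,t_n] \cong \kappa[t_1,\ldots,t_n]$, namely $\kappa(t_1,\ldots,t_n)$. Finally, infiniteness is obvious: $\kappa(t_1,\ldots,t_n)$ contains the infinitely many distinct elements $t_1, t_1^2, t_1^3,\ldots$ (or any other transcendental family), regardless of whether $\kappa$ is finite or not.

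There is no real obstacle here; the only thing requiring a moment's care is the initial translation of the coefficient condition ``$\langle a_{\underline i} \mid \underline i \in \N^n\rangle = A$'' into the condition ``at least one coefficient is a unit'', which uses the locality of $A$ in an essential way. Everything afterwards is formal localization theory.
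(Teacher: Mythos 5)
Your proof is correct and follows essentially the same route as the paper's: both identify $S$ as the complement of the prime ideal $\maxm A[t_1,\ldots,t_n]$ (using locality of $A$ to translate the generating condition into ``some coefficient is a unit''), so that $A(t_1,\ldots,t_n)$ is the localization of the polynomial ring at that prime, with residue field $\Frac(\kappa[t_1,\ldots,t_n]) = \kappa(t_1,\ldots,t_n)$. Your explicit remark on why this field is infinite is a small addition the paper leaves implicit.
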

\begin{proof}
Set $B := A[t_1,\ldots,t_n]$ and $\mathfrak{n} := \mathfrak{m} A[t_1,\ldots,t_n]$. Obviously, $\mathfrak{n}\nt B$ is a prime ideal and
	\begin{align*}
		S 	& = \bigl\{ \sum_{\underline{i}\in \N^n} a_{\underline{i}}\cdot t^{\underline{i}} \in A[t_1,\ldots,t_n] 
					\sd \skp{a_{\underline{i}}|\underline{i}\in I^n} = A \bigr\} \\
			& = \bigl\{ \sum_{\underline{i}\in \N^n} a_{\underline{i}}\cdot t^{\underline{i}} \in A[t_1,\ldots,t_n] 
					\sd \exists \underline{i}\in \N^n \colon a_{\underline{i}} \notin \mathfrak{m} \bigr\} \\
			& = B \backslash \mathfrak{n},
	\end{align*}
hence $A(t_1,\ldots,t_n) = B_\mathfrak{n}$ is local with maximal ideal $\mathfrak{n}B_\mathfrak{n}$. Its residue field is
	\begin{align*}
		B_\mathfrak{n}/\mathfrak{n}B_\mathfrak{n}
			 \cong \bigl( B/\mathfrak{n} \bigr)_\mathfrak{n} 
			 = \Frac\bigl( \kappa[t_1,\ldots,t_n]) 
			 = \kappa(t_1,\ldots,t_n).
	\end{align*}
\end{proof}

\begin{lemma} \label{lem_going_up}
Let $(A,\mathfrak{m})$ be a local ring and $A \inj B$ an integral ring extension such that $\mathfrak{m} B\nt B$ is a prime ideal. Then $B$ is also local with maximal ideal $\mathfrak{m} B$.
\end{lemma}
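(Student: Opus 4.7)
The plan is to show that $\mathfrak{m}B$ is the unique maximal ideal of $B$ by exploiting two standard consequences of integrality: contraction of maximal ideals and incomparability of primes over a common contraction.

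First, I would take an arbitrary maximal ideal $\mathfrak{n} \nt B$ and prove that $\mathfrak{n} \cap A = \mathfrak{m}$. The key input is that $B/\mathfrak{n}$ is a field and is integral over $A/(\mathfrak{n}\cap A)$; since an integral domain $R \subset K$ is a field whenever $K$ is a field integral over $R$, one concludes that $A/(\mathfrak{n}\cap A)$ is a field, hence $\mathfrak{n}\cap A$ is maximal, and therefore equals $\mathfrak{m}$ by locality of $A$. This immediately gives $\mathfrak{m} \subset \mathfrak{n}$, so $\mathfrak{m}B \subset \mathfrak{n}$.

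Second, I would apply incomparability: in an integral extension, two primes of $B$ with the same contraction to $A$ cannot be comparable unless they are equal. Since by hypothesis $\mathfrak{m}B$ is a prime of $B$, and clearly $\mathfrak{m}B \cap A = \mathfrak{m} = \mathfrak{n}\cap A$, the inclusion $\mathfrak{m}B \subset \mathfrak{n}$ forces $\mathfrak{m}B = \mathfrak{n}$. Since $\mathfrak{n}$ was an arbitrary maximal ideal, this shows that $\mathfrak{m}B$ is the unique maximal ideal of $B$.

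There is no real obstacle here; the only thing to be careful about is not to appeal to going-up in a circular way. The cleanest route is the two-step argument above, using just the basic facts that (a) contraction of a maximal ideal along an integral extension is maximal and (b) incomparability of primes lying over a common prime in an integral extension, both of which can be found, for example, in Atiyah--Macdonald. Alternatively, one could phrase the whole thing in terms of the fibre $B \otimes_A \kappa$, noting that $\Spec(B\otimes_A \kappa)$ is zero-dimensional by integrality and consists of the primes of $B$ lying over $\mathfrak{m}$; the hypothesis that $\mathfrak{m}B$ is prime then says this fibre has a unique point, which corresponds to $\mathfrak{m}B$ being the only prime of $B$ above $\mathfrak{m}$, and combined with step one this again yields the claim.
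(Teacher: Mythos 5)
Your proof is correct and follows essentially the same route as the paper's: both arguments reduce to showing that every maximal ideal of $B$ lies over $\mathfrak{m}$ and hence contains the prime ideal $\mathfrak{m}B$, and then invoke a standard consequence of integrality to conclude equality. The only cosmetic difference is the final step, where the paper cites the going-up package to say that primes of $B$ lying over the maximal ideal $\mathfrak{m}$ are themselves maximal (so $\mathfrak{m}B$ is already maximal), while you appeal to incomparability of primes with the same contraction; these are interchangeable here.
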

\begin{proof}
According to the Going-Up theorem (see \cite[pt.~I, ch.~2, Thm.~5]{matsumura}) the maximal ideals of $B$ are precisely the prime ideals of $B$ lying over $\mathfrak{m}$. But every prime ideal $\p\nt B$ with $A\cap \p \supseteq \mathfrak{m}$ must contain $\mathfrak{m} B$. Thus it is the only maximal ideal of $B$.
\end{proof}

\begin{lemma} \label{localisation_injecitvie}
Let $A$ be a ring, $S\subset A$ a multiplicatively closed subset and let $\iota \colon A \to S\inv A$ the canonical homomorphism. If $S$ does not contain any zero-divisors, then $\iota$ is injective. In this case a ring homomorphism $f_S \colon S\inv A\nach B$ is injective if and only if the composition $f \colon A \to B$ is injective.
\end{lemma}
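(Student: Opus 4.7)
The first assertion is a direct unravelling of the equivalence relation defining $S\inv A$. An element $a\in A$ satisfies $\iota(a)=0$ precisely when there exists $s\in S$ with $sa=0$ in $A$. Since by hypothesis no element of $S$ is a zero-divisor, $sa=0$ forces $a=0$. Hence $\ker(\iota)=0$.

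For the second assertion, first observe that since $f_S\colon S\inv A\to B$ is assumed to be a ring homomorphism, the universal property of localisation implies that $f_S(s/1) = f(s)$ is a unit in $B$ for every $s\in S$, and $f_S$ is forced to satisfy $f_S(a/s) = f(a)f(s)\inv$. Given this explicit formula, one direction of the equivalence is automatic: if $f_S$ is injective, then $f = f_S\circ\iota$ is a composition of injective maps (using the first part of the lemma) and is therefore injective.

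Conversely, suppose $f$ is injective and let $a/s\in\ker(f_S)$. Then $f(a)f(s)\inv=0$ in $B$, and multiplying by $f(s)$ gives $f(a)=0$. By injectivity of $f$, we conclude $a=0$, hence $a/s=0$ in $S\inv A$. Thus $\ker(f_S)=0$.

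The proof involves no real obstacle; the only mild subtlety is to justify the formula $f_S(a/s)=f(a)f(s)\inv$, which follows from the universal property of $S\inv A$ (ring homomorphisms out of the localisation are in bijection with ring homomorphisms out of $A$ sending $S$ to units, and the induced map is uniquely determined by this formula).
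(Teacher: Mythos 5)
Your proof is correct and follows essentially the same approach as the paper, whose own proof consists only of the computation $0 = f_S(\tfrac{a}{s}) = f(a)f(s)\inv \Rightarrow f(a)=0$ and omits the remaining (routine) parts. You have simply filled in the omitted details --- the injectivity of $\iota$ from the absence of zero-divisors in $S$, and the easy direction via the composition $f = f_S\circ\iota$ --- all of which are fine.
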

\begin{proof}
If $f$ is injective and $0 = f_S(\frac{a}{s})=f(a)f(s)\inv$, then $f(a)=0$ since $f(s)\inv$ is a unit. The rest is omitted.
\end{proof}

\begin{prop} \label{ring_of_fractions_base_change}
Let $A \inj B$ be a flat, local (i.e.~the maximal ideal of $A$ generates the maximal ideal of $B$), and integral extension of local rings, and $n\geq 1$. Then the canonical homomorphism
	\begin{align*}
		\phi \,\colon\, \B \otimes_A A(t_1,\ldots,t_n) \iso B(t_1,\ldots,t_n)
	\end{align*}
is an isomorphism. 
In particular, the induced homomorphism $A(t_1,\ldots,t_n)\nach B(t_1,\ldots,t_n)$ is an extension of local rings with infinite residue fields.
\end{prop}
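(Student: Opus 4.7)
The plan is to present both sides of $\phi$ as localizations of the polynomial ring $B[t_1,\ldots,t_n]$ and to show these localizations coincide by pinning down the maximal ideal via Lemma~\ref{lem_going_up}.

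Let $\mathfrak{n}_A := \mathfrak{m}_A A[t_1,\ldots,t_n]$, $\mathfrak{n}_B := \mathfrak{m}_B B[t_1,\ldots,t_n]$, $S := A[t_1,\ldots,t_n]\setminus\mathfrak{n}_A$, and $T := B[t_1,\ldots,t_n]\setminus\mathfrak{n}_B$, so that by Lemma~\ref{A(t)_local_infinite} we have $A(t_1,\ldots,t_n) = S^{-1}A[t_1,\ldots,t_n]$ and $B(t_1,\ldots,t_n) = T^{-1}B[t_1,\ldots,t_n]$. Combining $B \otimes_A A[t_1,\ldots,t_n] \cong B[t_1,\ldots,t_n]$ with the compatibility of tensor products and localization, one identifies $B \otimes_A A(t_1,\ldots,t_n) \cong S^{-1}B[t_1,\ldots,t_n]$, where $S$ is now viewed inside $B[t_1,\ldots,t_n]$ via $A \hookrightarrow B$. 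The local hypothesis gives $A^\times \subseteq B^\times$, hence $S \subseteq T$, and $\phi$ becomes the canonical localization map $S^{-1}B[t_1,\ldots,t_n] \to T^{-1}B[t_1,\ldots,t_n]$.

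The core step is to show that $S^{-1}B[t_1,\ldots,t_n]$ is already local, with maximal ideal $\mathfrak{m}_B \cdot S^{-1}B[t_1,\ldots,t_n]$. I would apply Lemma~\ref{lem_going_up} to the extension $A(t_1,\ldots,t_n) \hookrightarrow S^{-1}B[t_1,\ldots,t_n]$, which is integral as a base change of the integral $A \hookrightarrow B$, and injective because $A[t_1,\ldots,t_n] \hookrightarrow B[t_1,\ldots,t_n]$ is injective (reinforced by flatness of $B/A$). Using $\mathfrak{m}_A B = \mathfrak{m}_B$, the extension of the maximal ideal $\mathfrak{m}_A A(t_1,\ldots,t_n)$ equals $\mathfrak{m}_B \cdot S^{-1}B[t_1,\ldots,t_n]$, and the corresponding residue ring is
\[ S^{-1}\bigl(\kappa_B[t_1,\ldots,t_n]\bigr) \cong \kappa_B \otimes_{\kappa_A} \kappa_A(t_1,\ldots,t_n). \]
The main obstacle is verifying that this ring is a field. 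It is a localization of the domain $\kappa_B[t_1,\ldots,t_n]$ at nonzero polynomials from $\kappa_A[t_1,\ldots,t_n]$, hence itself a domain; it is integral over the field $\kappa_A(t_1,\ldots,t_n)$ because $\kappa_B/\kappa_A$ is algebraic; and a domain integral over a field is itself a field.

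With the hypothesis of Lemma~\ref{lem_going_up} verified, $S^{-1}B[t_1,\ldots,t_n]$ is local with the claimed maximal ideal, so its units are exactly the images of elements of $T$. The universal property of localization then yields an inverse to $\phi$, proving it is an isomorphism. For the final clause, Lemma~\ref{A(t)_local_infinite} gives that $A(t_1,\ldots,t_n)$ and $B(t_1,\ldots,t_n)$ are local with the infinite residue fields $\kappa_A(t_1,\ldots,t_n)$ and $\kappa_B(t_1,\ldots,t_n)$, and the induced map sends $\mathfrak{m}_A A(t_1,\ldots,t_n)$ into $\mathfrak{m}_B B(t_1,\ldots,t_n)$ by the local hypothesis, so it is a local ring homomorphism.
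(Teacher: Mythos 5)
Your proof is correct, and its engine --- using Lemma~\ref{lem_going_up} to show that the source of $\phi$ is already a local ring whose maximal ideal is generated by $\mathfrak{m}_B$, and then invoking the universal property of localisation --- is the same one that drives the paper's surjectivity argument. The packaging is genuinely different, though, and arguably cleaner: by writing both sides as localisations $S^{-1}B[t_1,\ldots,t_n]$ and $T^{-1}B[t_1,\ldots,t_n]$ of the same ring at nested multiplicative sets $S\subseteq T$, you get injectivity and surjectivity in one stroke from the mutual universal properties, whereas the paper runs a separate injectivity argument through $B\otimes_AB(t_1,\ldots,t_n)$ that genuinely uses flatness of $B$ over $A$; in your version flatness is essentially decorative, since exactness of localisation already yields the injectivity needed to apply Lemma~\ref{lem_going_up}. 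You also supply a detail the paper leaves implicit, namely the verification that $\mathfrak{m}_B\cdot S^{-1}B[t_1,\ldots,t_n]$ is prime (indeed maximal), via the identification of the residue ring with $\kappa_B\otimes_{\kappa_A}\kappa_A(t_1,\ldots,t_n)$, a domain integral over a field. Two small points of hygiene: the units of $S^{-1}B[t_1,\ldots,t_n]$ are not \emph{exactly} the images of elements of $T$ (the inverted fractions are units too), but all you need is that elements of $T$ become units, which holds because the prime $\mathfrak{m}_BB[t_1,\ldots,t_n]$ is disjoint from $S$ and hence is the contraction of its extension; and for the final clause your isomorphism in fact shows that $\mathfrak{m}_{A(t_1,\ldots,t_n)}$ generates $\mathfrak{m}_{B(t_1,\ldots,t_n)}$, so the induced extension is again local in the strong sense of the hypothesis, slightly more than the locality of the homomorphism you record.
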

\begin{proof}
For convenience, we write ``$t$'' for ``$t_1,\ldots,t_n$''.
Let $\mathfrak{m} \nt A$ and $\mathfrak{n} \nt B$ the maximal ideals, hence $\mathfrak{n}=\mathfrak{m} B$. We set $\mathfrak{m}_t := \mathfrak{m} A[t]$ and $\mathfrak{n}_t := \mathfrak{n} B[t] = \mathfrak{m} B[t]$. 
We use some facts for ring maps corresponding to the permanences of properties of schemes as stated in \cite[Appendix~C]{gw}. 

\vquad\noindent\textbf{Injectivity.} The flat base change $A[t]\inj B[t]$ is injective. According to Lemma~\ref{localisation_injecitvie}, the localisation $B[t]\inj B(t)$ is injective. Applying again Lemma~\ref{localisation_injecitvie}, this time to $A[t] \inj A(t)$, and using the commutative diagram
		\[ \begin{xy} 
		\xymatrix{
		A[t] \ar@{^{(}->}[r] \ar@{^{(}->}[d] & B[t] \ar@{^{(}->}[d] \\
		A(t) \ar[r] & B(t),
		}  
		\end{xy} \]
we see that $A(t) \inj B(t)$ is injective, hence also $B\otimes_AA(t) \inj B\otimes_AB(t)$ since $B$ is a flat $A$-module. Thus the commutative diagram
		\[ \begin{xy} 
		\xymatrix{
		B\otimes_AA(t) \ar[rr]^\phi \ar@{^{(}->}[dr] && B(t) \ar@{^{(}->}[dl] \\
		&B\otimes_AB(t)
		}  
		\end{xy} \]
tells us that $\phi$ is injective.

\vquad\noindent\textbf{Surjectivity.} The base change  $A(t)\inj B\otimes_AA(t)$ is an integral extension. By Lemma~\ref{lem_going_up}, $B\otimes_AA(t)$ is a local ring with maximal ideal $\mathfrak{m}_t(B\otimes_AA(t))$. Furthermore, we have a commutative diagram
		\[ \begin{xy} 
		\xymatrix{
		B\otimes_AA(t) \ar[r]^-{\cong} \ar[d]_\phi
		& B\otimes_A \bigl( A[t]\otimes_{A[t]} A(t) \bigr) \ar[r]^{\cong} 
		& \bigl( B\otimes_A A[t] \bigr) \otimes_{A[t]} A(t) \ar[d]^-\cong 
		\\ B(t) && B[t] \otimes_{A[t]} A(t) \ar[ll]_{\phi'}
		}  
		\end{xy} \]
Hence surjectivity for $\phi$ is equivalent to surjectivity for $\phi'$.
Consider the homomorphism
	\begin{align*}
		\psi \colon B[t] &\Nach B[t]\otimes_{A[t]}A(t) \\
		f &\abb f\otimes 1
	\end{align*}
If $f = \sum_{i=0}^d b_it^i \in B[t]\backslash\mathfrak{n}_t$, then there is a $j$ such that $b_j$ is a unit in $B$. Then its image $\psi(b_j)$ is a unit as well and hence $\psi(f) \in\bigl(B[t]\otimes_{A[t]}A(t)\bigr)^\times$.
Thus the universal property of localisation yields a homomorphism
	\begin{align*}
		\psi' \colon B(t) \Nach B[t]\otimes_{A[t]}A(t)
	\end{align*}
such that $\psi = \psi'\circ\iota$ where $\iota \colon B[t]\inj B(t)$. The commutative diagram
		\[ \begin{xy} 
		\xymatrix{
		B[t] \ar[rr]^\iota \ar[dd]_{\id_{B[t]}} \ar[dr]^\psi  
		&& B(t) \ar[dl]_{\psi'} \ar@{.>}[dd]^{\id_{B(t)}} \\
		& B[t]\otimes_{A[t]}A(t)  \ar[dr]^{\phi'} \\
		B[t] \ar[rr]^\iota && B(t)
		}  
		\end{xy} \]
and the uniqueness for lifts to the localisation show $\phi'\circ\psi'=\id_{B(t)}$. So $\phi'$ is surjective.
\end{proof}

	\section{Milnor K-theory and algebraic K-theory}

This section is dedicated to \df{Kerz}'s Theorem~\ref{MQM} which we restate at this place again. 

\begin{thm}[{\cite[Prop.~10~(6)]{kerz}}] \label{MQM_appendix}
Let $A$ be a local ring and $n\in\N$. Then there exists a natural map
	\begin{align*}
		\PhiMQ(A) \,\colon\, \iKM_nA \Nach \K_nA
	\end{align*}
as well as a natural map
	\begin{align*}
		\PhiQM(A) \,\colon\, \K_nA \Nach \iKM_nA
	\end{align*}
such that the composition
	\begin{align*}
		\iKM_nA \Nach[\PhiMQ(A)] \K_nA \Nach[\PhiQM(A)] \iKM_nA
	\end{align*}
is multiplication with $\chi_n := (-1)^{n-1}\cdot (n-1)!$.
\end{thm}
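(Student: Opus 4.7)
The plan is to reduce to the case of a local ring with infinite residue field by passing to the ring of rational functions $A(t)$. By Lemma~\ref{A(t)_local_infinite}, $A(t)$ is local with infinite residue field $\kappa(t)$, so by Proposition~\ref{iKM_properties}(v) we have $\KM_n A(t) \cong \iKM_n A(t)$. By construction, $\iKM_n A$ embeds into $\KM_n A(t)$ as the equaliser of the two structure maps into $\KM_n A(t_1, t_2)$.

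I would treat the case of a local ring $L$ with \emph{infinite} residue field as a black box. Here $\PhiMQ(L)\,\colon\,\KM_n L\to\K_n L$ is defined via the cup product in the graded-commutative ring $\K_* L$: the symbol $\{x_1,\ldots,x_n\}$ maps to $[x_1]\cup\cdots\cup[x_n]$ under the identification $\K_1 L = L^\times$, and this is well-defined because the Steinberg relations $\{x,1-x\}=0$ hold in $\K_2$ of any commutative ring. The reverse map $\PhiQM(L)\,\colon\,\K_n L\to\KM_n L$ is the Nesterenko--Suslin map \cite[Thm.~4.1]{suslin}, and their main computation yields $\PhiQM(L)\circ\PhiMQ(L)=\chi_n\cdot\id$; the factor $\chi_n=(-1)^{n-1}(n-1)!$ arises from antisymmetrising an ordered tensor product of units into an alternating Milnor symbol.

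For an arbitrary local ring $A$, I define $\PhiQM(A)$ as the composition
\[
\K_n A \Nach[\K_n(\iota)] \K_n A(t) \Nach[\PhiQM(A(t))] \KM_n A(t),
\]
and check that the image lies in $\iKM_n A\subseteq\KM_n A(t)$. Applying naturality of $\PhiQM$ to the ring maps $\iota_1,\iota_2\,\colon\,A(t)\to A(t_1,t_2)$ and using that the precompositions $A\to A(t)\to A(t_1,t_2)$ coincide for $k=1,2$, one sees that $\delta^{\mathrm{M}}_n$ annihilates the image, so it lies in the desired equaliser. The map $\PhiMQ(A)$ is constructed analogously, factoring the naive cup-product $\KM_n A\to\K_n A$ through the surjection $\KM_n A\twoheadrightarrow\iKM_n A$ by a compatibility check over $A(t)$. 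Finally, $\PhiQM(A)\circ\PhiMQ(A)=\chi_n\cdot\id$ follows from the corresponding identity over $A(t)$ by naturality and the injectivity of $\iKM_n A\hookrightarrow\KM_n A(t)$.

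The main obstacle is the classical case over a local ring with infinite residue field, specifically the construction of $\PhiQM(L)$ and the verification of the composition formula; this is the deep input provided by Nesterenko--Suslin for fields and extended by Kerz to local rings with infinite residue field. Once this input is available, the general case is a diagram chase enabled by the infinite residue field of $A(t)$ furnished by Lemma~\ref{A(t)_local_infinite} and by the equaliser definition of $\iKM_n$.
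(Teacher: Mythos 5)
Your overall strategy---reduce to the local ring $A(t)$, which has infinite residue field by Lemma~\ref{A(t)_local_infinite}, invoke Nesterenko--Suslin there, and descend to $A$ using the kernel/equaliser description of $\iKM_n$---is the same as the paper's, and your construction of $\PhiQM(A)$ as the composite $\K_nA \to \K_nA(t) \to \KM_nA(t)$ landing in $\iKM_nA$ is correct and coincides with the paper's map. The gap is in the construction of $\PhiMQ(A)$. You propose to lift $\alpha\in\iKM_nA$ along the surjection $\KM_nA\twoheadrightarrow\iKM_nA$ and apply the cup product $\KM_nA\to\K_nA$ to the lift. For this to be well defined, the kernel of $\KM_nA\twoheadrightarrow\iKM_nA$---which by construction equals the kernel of $\KM_nA\to\KM_nA(t)$---must be killed by the cup product into $\K_nA$. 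Your ``compatibility check over $A(t)$'' only shows that its image dies after further mapping to $\K_nA(t)$; to conclude that it is already zero in $\K_nA$ you need the injectivity of $\K_nA\to\K_nA(t)$. This is not formal: it is part of the isomorphism $\K_*A\cong\hat{\K}_*A$ of \eqref{quillen_hat_iso}, which rests on Kerz's Proposition~9 and the existence of compatible transfer maps for finite \etale{} extensions in algebraic K-theory. Your closing assessment names Nesterenko--Suslin as the sole deep input, but this second input is equally essential and is absent from your argument.

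The paper sidesteps your lifting step entirely: it defines $\PhiMQ(A)$ by composing $\iKM_nA\hookrightarrow\KM_nA(t)\to\K_nA(t)$, observing by naturality that the image lies in $\ker\delta^\mathrm{Q}_n=\hat{\K}_nA$, and then identifying $\hat{\K}_nA$ with $\K_nA$ via \eqref{quillen_hat_iso}. This uses the same nontrivial fact about algebraic K-theory, but makes it explicit, and moreover avoids appealing to the surjectivity of $\KM_nA\to\iKM_nA$ (Kerz's Theorem~A), which your construction additionally requires. Once a correct construction of $\PhiMQ(A)$ is in place, your verification of $\PhiQM(A)\circ\PhiMQ(A)=\chi_n\cdot\id$ via naturality and the injectivity of $\iKM_nA\hookrightarrow\KM_nA(t)$ goes through as you describe.
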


As there are few details given in \cite{kerz}, we present more of them in this section. The statement relies on an analogous statement by \df{Nesterenko} and \df{Suslin} for local rings with infinite residue field and classical Milnor K-theory. 

\vquad\noindent For any commutative ring $A$ there exists a graded-commutative product map $K_1(A) \otimes\ldots\otimes K_1(A) \nach \K_n(A)$ such that the Steinberg relations are satisfied \cite[IV.1.10.1]{weibel}. Thus there exists a graded map
	\begin{align*}
		\PhiMQ'(A) \,\colon\, \KM_*A \Nach \K_*A.
	\end{align*}
In the case of a local ring with an infinite residue field, there is also a map in the other direction.

\begin{thm}[{\cite[Thm.~4.1]{suslin}}] \label{suslin_appendix_thm}
Let $A$ be a local ring with infinite residue field and $n\in\N$. Then there exists a natural map
	\begin{align*}
		\PhiQM'(A) \,\colon\, \K_nA \Nach \KM_n(A)
	\end{align*}
such that the composition
	\begin{align*}
		\KM_nA \Nach[\PhiMQ'(A)] \K_nA \Nach[\PhiQM'(A)] \KM_nA
	\end{align*}
is multiplication with $\chi_n = (-1)^{n-1}\cdot (n-1)!$.
\end{thm}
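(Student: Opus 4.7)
The plan is to follow Nesterenko-Suslin's original strategy, which translates algebraic $\K$-theory into group homology and extracts a Milnor symbol via eigenvalue decomposition. The map $\PhiMQ'(A)$ is the easy direction: algebraic $\K$-theory is graded-commutative, $\K_1 A \cong A^\times = \KM_1 A$ for local $A$, and the iterated product $\K_1(A)^{\otimes n} \Nach \K_n(A)$ extends to a graded ring map out of the tensor algebra $\T_* A^\times$. A direct verification (using Loday's explicit symbol for the product in $\K_2$ of a local ring) shows that the Steinberg relation $\{x, 1-x\} = 0$ holds in $\K_2 A$, so this map factors through $\KM_n A$.

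The construction of $\PhiQM'(A)$ is the substantial content and proceeds in three steps. First, use the identification $\K_n A = \pi_n \BGL(A)^+$ together with the Hurewicz homomorphism to obtain a map $\K_n A \Nach H_n(\GL(A), \Z) = H_n(\BGL(A)^+, \Z)$. Since $\BGL(A)^+$ is an $H$-space, its homology carries a Pontryagin product and, by Milnor-Moore, one can pass to the quotient $\overline{H}_n(\GL(A), \Z)$ by decomposable classes. Second, invoke Suslin's homological stability for $\GL_n$ over a local ring with infinite residue field: the inclusion $\GL_n(A) \inj \GL(A)$ induces a surjection on the $n$-th homology modulo decomposables, so it suffices to work with $H_n(\GL_n(A), \Z)$. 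Third, and crucially, construct the Nesterenko-Suslin symbol map $H_n(\GL_n(A), \Z) \Nach \KM_n A$. Morally, it sends an $n$-cycle represented by $n$ commuting diagonalisable matrices on $A^n$ (together with an ordering of their common eigenbasis) to the Milnor symbol of their eigenvalues. Precisely, one builds a resolution of the trivial $\Z[\GL_n(A)]$-module $\Z$ by complexes of configurations of vectors in $A^n$ in general position, defines a chain-level cocycle with values in $\KM_n A$ via a determinantal/eigenvalue formula, and checks that it is $\GL_n(A)$-invariant and vanishes on decomposables.

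To evaluate the composition $\PhiQM'(A) \circ \PhiMQ'(A)$ on a generator $\{x_1, \ldots, x_n\}$, unwind the definitions. The image under $\PhiMQ'$ in $\K_n A$ pushes forward under Hurewicz to the class in $H_n(\GL_n(A), \Z)$ of the Pontryagin product of the 1-cycles $[x_i]$, represented concretely by the commuting diagonal matrices $d_i = \mathrm{diag}(1, \ldots, x_i, \ldots, 1)$ (with $x_i$ in the $i$-th slot), which are simultaneously diagonal in the standard basis. Evaluating the Nesterenko-Suslin symbol on this cycle and matching the symmetrisation coming from the Pontryagin product against the graded-commutativity of the Milnor product yields the single scalar $\chi_n = (-1)^{n-1}(n-1)!$ times $\{x_1, \ldots, x_n\}$; the factor $(n-1)!$ is the residual symmetrisation after quotienting by the Milnor relations, and the sign is a determinant convention.

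The main obstacle is the third step of the construction of $\PhiQM'(A)$: producing an explicit $\GL_n(A)$-invariant cocycle on the configuration complex, proving that it descends to homology, and verifying that it annihilates the relations inherent in the configuration resolution as well as all decomposable classes. This is where the hypothesis of an infinite residue field is indispensable: it guarantees that for any $k$, the set of $k$-tuples of vectors of $A^n$ in general position is non-empty (and in a precise sense sufficiently generic), which is exactly what keeps the configuration resolution exact and makes the $\GL_n(A)$-action sufficiently transitive. Once this symbol map is in place, naturality of all constructions and the combinatorial computation in the previous paragraph complete the proof.
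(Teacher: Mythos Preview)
Your outline is essentially correct and matches the Nesterenko--Suslin strategy that the paper invokes. Note, however, that the paper does not actually prove this theorem: it is quoted as \cite[Thm.~4.1]{suslin} and only a brief sketch of the construction of $\PhiQM'(A)$ is given. That sketch agrees with yours in spirit --- Hurewicz followed by Suslin stability for $\GL_n$ over a local ring with infinite residue field --- but phrases the target slightly differently: instead of passing to indecomposables $\overline{H}_n(\GL(A))$ via Milnor--Moore, the paper uses the Nesterenko--Suslin isomorphisms $\Ho_n(\GL_n(A)) \cong \Ho_n(\GL(A))$ and $\Ho_n(\GL_n(A))/\Ho_n(\GL_{n-1}(A)) \cong \KM_nA$ directly, so that $\PhiQM'(A)$ is the composite
\[
\K_nA \Nach[\text{Hurewicz}] \Ho_n(\GL(A)) \cong \Ho_n(\GL_n(A)) \surj \Ho_n(\GL_n(A))/\Ho_n(\GL_{n-1}(A)) \cong \KM_nA.
\]
Your configuration-complex description of the symbol map and the computation yielding $\chi_n$ are a faithful expansion of what actually happens in \cite{suslin}; the paper simply takes all of this on faith from the reference.
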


We say a few words on the construction of the map $\PhiQM'(A)$. By \cite[Thm.~3.25]{suslin} there are natural isomorphisms
	\begin{align*}
		\Ho_n(\GL_n(A)) &\iso \Ho_n(\GL(A)) \text{  and} \\
		\Ho_n(\GL_n(A))/\Ho_n(\GL_{n-1}(A)) &\iso \KM_nA.
	\end{align*}
for a local ring $A$ with infinite residue field. So a homomorphism $\PhiQM'(A)$ can be defined as the composition of homomorphisms such that the following diagram commutes.
	\small
	\[ \begin{xy} 
	\xymatrix{
		\K_nA \ar@{.>}[d]^{\PhiQM'(A)} & \ar@{=}[l] \pi_n(\BGL(A)^+) \ar[r]^-{\text{Hurewicz}}
		&  \Ho_n(\BGL(A)^+) \ar@{=}[r]& \Ho_n(\GL(A)) \\
		\KM_nA && \ar[ll]_-\cong \Ho_n(\GL_n(A))/\Ho_n(\GL_{n-1}(A))  
		& \Ho_n(\GL_n(A)) \ar[l] \ar[u]^-\cong
	}  
	\end{xy} \]
	\normalsize
	
\vquad\noindent For passing from the case of local rings with infinite residue fields to the case of arbitrary local rings, we consider the construction of improved Milnor K-theory in a more general setting. Given a functor $E$ from rings to abelian groups, we can associate to it an \emph{improved} functor $\hat{E}$ given by
	\begin{align*}
		\hat{E}(A) := \ker\bigl[ E(A(t)) \Nach[\delta] E(A(t_1,t_2)) \bigr]
	\end{align*}
where $A(t)$ and $A(t_1,t_2)$ are rings of rational functions  and $\delta$ is the map $E(\iota_1)-E(\iota_2)$ (cf.~Definition~\ref{def_ring_rational_functions_appendix}). This construction is essentially due to \df{Gabber} \cite{gabber} and was investigated by \df{Kerz} \cite{kerz}. The latter one proved \cite[Prop.~9]{kerz} that the canonical map $E(A) \nach \hat{E}(A)$ is an isomorphism if one of the following two conditions holds
	\begin{enumerate}
		\item The ring $A$ is a local with infinite residue field.
		\item The ring $A$ is local and $E$ admits compatible norm maps for all finite \etale{} 	extensions of local rings (cf. \cite[p.~5]{kerz}).
	\end{enumerate}	 

Algebraic K-theory admits those norm maps (cf.~\cite[IV.6.3.2]{weibel} where they are called ``transfer maps''). Thus for any local ring $A$ we have a natural isomorphism 
	\begin{align} \label{quillen_hat_iso}
		\K_*A \Nach[\cong] \hat{\K}_*A.
	\end{align}
	
\begin{proof}[Proof of Theorem~\ref{MQM_appendix}]
Let $A$ be a local ring. Then the rings $A(t)$ and $A(t_1,t_2)$ are local rings with infinite residue field, see Lemma~\ref{A(t)_local_infinite}. By naturality of the maps $\PhiMQ'$ and $\PhiQM'$ together with the isomorphism~\eqref{quillen_hat_iso}, we obtain the desired maps via the universal property of the kernel as indicated in the diagram
	\[ \begin{xy} 
	\xymatrix{
		& \iKM_*A \ar@{^{(}->}[r]^{\text{kernel}}  \ar@{.>}[d]_{\PhiMQ(A)} 
		& \KM_*A(t) \ar[r]^{\delta^\mathrm{M}_*} \ar[d]^{\PhiMQ'(A(t))}
		& \KM_*A(t_1,t_2) \ar[d]^{\PhiMQ'(A(t_1,t_2))}
		\\
		& \K_*A \ar@{.>}[d]_{\PhiQM(A)} \ar@{^{(}->}[r]^{\text{kernel}} 
		& \K_*A(t) \ar[d]^{\PhiQM'(A(t))} \ar[r]^{\delta^\mathrm{Q}_*}
		& \K_*A(t_1,t_2) \ar[d]^{\PhiQM'(A(t_1,t_2))} 
		\\
		& \iKM_*A \ar@{^{(}->}[r]^{\text{kernel}}
		& \KM_*A(t) \ar[r]^{\delta^\mathrm{M}_*}
		& \KM_*A(t_1,t_2).
	}  
	\end{xy} \] 
By Theorem~\ref{suslin_appendix_thm}, the middle vertical composition is multiplication with the natural number $\chi_n$, hence this also holds for the left vertical composition.
\end{proof}

	\section{The norm map}
There is a norm map for Milnor K-theory of fields. More precisely, for every field extension $F \inj F'$ there is a homomorphism $N_{F'/F} \colon \KM_*F' \nach \KM_*F$ of graded $\KM_*F$-modules such that the following two conditions hold.
	\begin{enumerate}
		\item \textbf{Functoriality}. For every tower $F\inj F'\inj F''$ of fields holds $N_{F/F}=\id$ and $N_{F'/F}\circ N_{F''/F'}=N_{F''/F}$.
		\item \textbf{Reciprocity.} For $\alpha\in\KM_*F(t)$ holds
			\begin{align*}
				\sum_v N_{\kappa(v)/F}\circ\partial_v(\alpha) = 0
			\end{align*}
		where $v$ runs over all discrete valuations of $F(t)$ over $F$ and $\kappa(v)$ is the residue field of $v$.
	\end{enumerate}
We give a brief sketch of the construction of the norm map. For a detailed exposition we refer the reader to \cite[\S~7.3]{gille}.	
	
\vquad \noindent	
For a finite field extension $F'/F$ such that $F'\cong F[X]/\skp{\pi}$ where $\pi$ is monic and irreducible, the norm map is defined via the split exact Bass-Tate sequence
	\begin{align} \label{bass_tate_sequence}
	0 \Nach \KM_nF \Nach \KM_nF(X) \overset{\bigoplus \partial_P}{\Nach} \bigoplus_P \KM_{n-1}(F[X]/\skp{P}) \Nach 0,
	\end{align}
where the sum is over all monic irreducible $P\in F[X]$ and $\bigoplus \partial_P$ is the well-defined sum of the tame symbols $\partial_P$ (with respect to the discrete valuation which is associated to the prime element $P$, see Theorem~\ref{tame_symbol}). Taking coproducts over all values of $n$, we obtain an exact sequence of $\KM_*F$-modules.  Let $\partial_\infty$ be the tame symbol corresponding to the negative degree valuation on $F(X)$. Its residue field is isomorphic to $F$. The composition $\KM_*F \nach \KM_*F(X) \nach[\partial_\infty] \KM_{*-1}F$ vanishes since all elements in $F$ have degree zero. By the universal property of the cokernel, we obtain a map $N$ as indicated in the diagram (where all the maps have degree zero).
	\begin{align} \label{eq_norm_classical}
	\begin{xy} 
	\xymatrix{
		0 \ar[r]
		& \KM_*F \ar[r]^-{\iota_*} \ar[rd]_-0
		& \KM_*F(X) \ar[r]^-{\bigoplus \partial_P} \ar[d]^-{\partial_\infty}
		& \bigoplus_P \KM_{*-1}(F[X]/\skp{P}) \ar[r] \ar@{.>}[dl]^-N
		& 0
		\\
		&& \KM_{*-1}F 
		}
	\end{xy}
	\end{align}
By precomposing $N$ with the inclusion of the factor $\K_{*-1}(F[X]/\skp{\pi})$, we get the \df{norm map} 
	\begin{align*}
		N_{F'/F} \colon \KM_*F' \nach \KM_*F
	\end{align*}	 
of the extension $F'/F$.

All the maps in diagram \eqref{eq_norm_classical} are (graded) $\KM_*F$-linear. This is clear for the map $\iota_*$ which is induced by the inclusion $\iota \colon F \nach F(X)$. With the explicit description of the tame symbol (see Theorem~\ref{tame_symbol}) we deduce linearity for the tame symbols $\partial : \KM_*F(X) \nach \KM_{*-1}\kappa$ where $\kappa=F$ (if $\partial=\partial_\infty$) or $\kappa=F[X]/\skp{P}$ (if $\partial=\partial_P$ for a monic irreducible $P\in F[X]$). Let $\pi$ be a uniformiser and $\O$ the valuation ring with respect to the valuation in question. Then $\KM_*F(X)$ is additively generated by the set
	\begin{align*}
		\Bigl\{ \{\pi,u_2,\ldots,u_n\},\{u_1,\ldots,u_n\} \Sd n\geq 1, u_1,\ldots,u_n \in \O^\times \Bigr\}.
	\end{align*}		
We note that $\partial(\{u_1,\ldots,u_n\})=0$ for all $u_1,\ldots,u_n\in\O^\times$, hence linearity for those elements is clear. Now let $x_1,\ldots, x_k\in F^\times$. Observe that $F^\times \subseteq \O^\times$ and that the map $F\nach\kappa$ is injective. Thus we see $\KM_*F$-linearity as follows.
	\begin{align} \label{eq_tame_linearity}
		\partial \bigl( \{x_1, \ldots, x_k\} \cdot \{\pi,u_2,\ldots,u_n\} \bigr)
			&= \partial \bigl( \{x_1, \ldots, x_k, \pi,u_2,\ldots,u_n\} \bigr) \\
			&= \partial \bigl( (-1)^k \{\pi, x_1, \ldots, x_k,u_2,\ldots,u_n\} \bigr) \nonumber \\
			&= (-1)^k\{\bar{x}_1, \ldots, \bar{x}_k,\bar{u}_2,\ldots,\bar{u}_n\} \nonumber \\
			&= (-1)^k\{x_1, \ldots, x_k\} \cdot \{\bar{u}_2,\ldots,\bar{u}_n\}  \nonumber \\
			&= (-1)^k\{x_1, \ldots, x_k\} \cdot  \partial \bigl( \{\pi,u_2,\ldots,u_n\} \bigr) \nonumber
	\end{align}
The sign appears for $\partial$ having degree $-1$. Hence $\partial_\infty$ and also the direct sum $\bigoplus \partial_P$ in \eqref{eq_norm_classical} are $\KM_*F$-linear. Thus this also holds for the induced map $N$ and (since the inclusion of a factor is also linear) as well for the norm map $N_{F'/F}$.  

For an arbitrary finite field extension $F(\alpha_1,\ldots,\alpha_n)/F$, we define the norm map via a decomposition
	\begin{align*}
		F \subset F(\alpha_1) \subset F(\alpha_1,\alpha_2) \subset \ldots \subset F(\alpha_1,\ldots,\alpha_n).
	\end{align*}
This construction is due to \df{Bass} and \df{Tate} \cite[\S 5]{bass_tate}. Its independence of the generating family $(\alpha_1,\ldots,\alpha_n)$ was proven by \df{Kato} \cite{kato}. 

\vquad\noindent
\df{Kerz} extended this to the realm of finite \etale{} extensions of semi-local rings with infinite residue fields \cite{kerz_gersten_conjecture}. For improved Milnor K-theory, he also showed the existence of norm maps for finite \etale{} extensions of arbitrary local rings by reducing to the case of infinite residue fields and classical Milnor K-theory \cite{kerz}.

As a matter of fact, finite \etale{} extensions $A\inj B$ of local rings are precisely those of the form $B \cong A[t]/\skp{\pi}$ with $\pi$ monic irreducible and $\mathrm{Disc}(\pi)\in A^\times$. Our aim is to drop the condition with the discriminant. This is possible by restricting to factorial domains so that we can use a Bass-Tate-like sequence by \df{Kerz}. This is basically \df{Kerz}' work though not stated literally by himself.

\begin{prop} \label{norm_appendix}
Let $A$ be a local and factorial domain and $\iota\colon A\inj B$ an extension of local rings such that $B\cong A[X]/\skp{\pi}$ for a monic irreducible polynomial $\pi\in A[X]$.
Then there exists a norm map
	\begin{align*}
		N_{B/A} \,\colon\, \iKM_*B \Nach \iKM_*A
	\end{align*}
satisfying the projection formula
	\begin{align} \label{projection_formula_appendix}
		N_{B/A}\bigl(\{\iota_*(x),y\}\bigr) = \bigl\{x,N_{B/A}(y)\bigr\}
	\end{align}
for all $x\in\iKM_*A$ and $y\in\iKM_*B$. 
\end{prop}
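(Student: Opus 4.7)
The strategy is to mimic the classical Bass--Tate construction of the norm map at the level of classical Milnor K-theory, not for $A$ itself (which may have finite residue field) but for the rings of rational functions $A(t)$ and $A(t_1,t_2)$, and then to descend to the improved Milnor K-theory of $A$ and $B$ via the kernel definition.

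First I would record the preparatory facts. Since $A$ is a local factorial domain, $A[t]$ and $A[t_1,t_2]$ are factorial, and localising at the extension of the maximal ideal yields the local UFDs $A(t)$ and $A(t_1,t_2)$ with infinite residue fields (Lemma~\ref{A(t)_local_infinite}). On such rings, classical and improved Milnor K-theory agree by Proposition~\ref{iKM_properties}~(v). A short Gauss's lemma argument shows that the monic irreducible $\pi \in A[X]$ remains monic and irreducible in $A(t)[X]$ and in $A(t_1,t_2)[X]$: an alleged factorisation could be cleared of denominators into $\Frac(A)[t][X]$ and then collapses to one in $\Frac(A)[X]$ by comparing $t$-degrees. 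Combined with Proposition~\ref{ring_of_fractions_base_change}, this identifies $B \otimes_A A(t) \cong A(t)[X]/\skp{\pi}$ and $B \otimes_A A(t_1,t_2) \cong A(t_1,t_2)[X]/\skp{\pi}$ as local ring extensions of the same form as $B/A$, but with infinite residue fields.

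Next I would invoke the Bass--Tate-type split exact sequence described in the discussion preceding the statement (as extended by Kerz beyond the field case to factorial local rings with infinite residue field). Applying it with base ring $R = A(t)$ (resp.\ $R = A(t_1,t_2)$) and using the tame symbol at infinity on $R(X)$, the diagram~\eqref{eq_norm_classical} produces a classical norm map $N^{\KM}_{R[X]/\skp{\pi} \,/\, R} \colon \KM_*(R[X]/\skp{\pi}) \to \KM_* R$ satisfying the $\KM_* R$-linear projection formula, exactly as in the field case. By Proposition~\ref{iKM_properties}~(v) these are norm maps $\iKM_*(B(t)) \to \iKM_*(A(t))$ and $\iKM_*(B(t_1,t_2)) \to \iKM_*(A(t_1,t_2))$.

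To get the norm map on $\iKM_* B \to \iKM_* A$, I would use the defining exact sequence
\begin{align*}
	0 \to \iKM_*A \to \KM_*A(t) \Nach[\delta^\mathrm{M}_*] \KM_*A(t_1,t_2)
\end{align*}
(and the analogue for $B$) together with the commutative square expressing that the two classical norms built in the previous step intertwine $\delta^\mathrm{M}_*$ on the $B$-side with $\delta^\mathrm{M}_*$ on the $A$-side. The universal property of the kernel then yields a unique $N_{B/A}\colon \iKM_*B \to \iKM_*A$ restricting the classical norm on $\KM_*B(t)$. The projection formula~\eqref{projection_formula_appendix} is inherited from the classical one by injectivity of $\iKM_*A \inj \KM_*A(t)$. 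The main obstacle is precisely the commutativity of that square, i.e.\ naturality of the classical norm maps under the two inclusions $\iota_1,\iota_2 \colon A(t) \to A(t_1,t_2)$ (and their $B$-counterparts). This comes down to checking that the Bass--Tate sequence, the tame symbol at infinity, and the induced map from the cokernel in diagram~\eqref{eq_norm_classical} are functorial under this flat base change; this is the technical heart of the argument and is the analogue, in the non-\etale{} factorial setting, of the compatibility Kerz establishes for \etale{} extensions in \cite{kerz_gersten_conjecture,kerz}.
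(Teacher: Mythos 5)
Your plan follows essentially the same route as the paper: construct the norm via the Bass--Tate-type split exact sequence (in Kerz's version for local factorial domains with infinite residue field) over $A(t)$ and $A(t_1,t_2)$, identify $B(t)\cong A(t)[X]/\skp{\pi}$ via Proposition~\ref{ring_of_fractions_base_change}, and descend to $\iKM_*B\to\iKM_*A$ by the universal property of the kernel, with the projection formula being $\KM_*A$-linearity. The naturality square you flag as the main obstacle is exactly what the paper verifies with its $\delta$-diagram (using that a monic irreducible polynomial over $A(t)$ stays monic irreducible over $A(t_1,t_2)$ under both $\iota_1$ and $\iota_2$), so your sketch is correct and, if anything, slightly more explicit than the paper about why $\pi$ remains irreducible after base change.
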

\begin{proof} 
References within this proof refer to \cite[\S 4]{kerz_gersten_conjecture} unless said otherwise. For semi-local domains with infinite residue field, there is a split exact sequence
	\begin{align} \label{bass-tate-kerz}
	0 \Nach \KM_nA \Nach \K^\mathrm{t}_nA \Nach[\bigoplus \partial] \bigoplus_P \KM_{n-1}(A[X]/\skp{P}) \Nach 0,
	\end{align}
where the sum is over all monic irreducible $P\in A[X]$, see [Thm.~4.4]. Here $\K^\mathrm{t}_nA$ is an appropriate group generated by so-called \emph{feasible} tuples of elements in $\Frac(A)(X)$. A tuple
	\begin{align*}
		\Bigl(\frac{p_1}{q_1},\ldots,\frac{p_n}{q_n}\Bigr)
	\end{align*}
with $p_i,q_i\in A[X]$ and all $p_i/q_i$ in reduced form is \df{feasible} iff the highest nonvanishing coefficients of $p_i,q_i$ are invertible in $A$ and for irreducible factors $u$ of $p_i$ or $q_i$ and $v$ of $p_j$ or $q_j$ ($i\neq j$), we have $u=av$ with $a\in A^\times$ or $(u,v)=1$, see [Def.~4.1]. Now $\K^\mathrm{t}_nA$ is defined by modding out relations for $A^\times$-linearity which yield a $\KM_*A$-module structure on $\K^\mathrm{t}_*A$, and the relations
	\begin{align*}
		(p_1,\ldots,p,1-p,\ldots,p_n)=0 \quad\text{and}\quad (p_1,\ldots,p,-p,\ldots,p_n)=0
	\end{align*}
for $p\in\Frac(A)(X)$, see [Def.~4.2].

The map $\bigoplus \partial_P$ is similar to the tame symbol appearing in \eqref{bass_tate_sequence}. First, for all monic irreducible $P\in A[t]$ there are maps $\partial_P\colon\K^\mathrm{t}_nA\nach\KM_{n-1}A[t]/\skp{P}$ satisfying
	\begin{align} \label{eq_improved_tame_formula}
		\partial_P \{P,p_2,\ldots,p_n\}) = \{\overline{p_2},\ldots,\overline{p_n}\}
	\end{align}
for $p_i\in A[X]$ such that $(P,p_i)=1$ [Proof of Theorem.~4.4, Step~2]. Then $\bigoplus \partial$ is the well-defined sum of those $\partial_P$. Analogously, we define a map $\partial_\infty\colon\K^\mathrm{t}_nA\nach\KM_{n-1}A$ corresponding to the negative degree valuation by using $X\inv$ as a uniformiser.

\vquad\noindent
With this we can define norm maps in the case of an infinite residue field exactly as in the case of fields via a map $N$ as indicated in the following diagram.
	\[ \begin{xy}
		\xymatrix{
		0 \ar[r]
		& \KM_*A \ar[r]^i \ar[rd]_-0
		& \K^\mathrm{t}_*A \ar[r]^-{\bigoplus \partial} \ar[d]^-{\partial_\infty}
		& \bigoplus_P \KM_*(A[X]/\skp{P}) \ar[r] \ar@{.>}[dl]^-N
		& 0.
		\\
		&& \KM_*A 
		}
	\end{xy} \]
Again, all the maps are $\KM_*A$-linear. For the map $i$ this follows directly from the construction above. For the tame symbols this holds as in \eqref{eq_tame_linearity} since they satisfy \eqref{eq_improved_tame_formula}. For the map $N$ this is a formal consequence. Again, we obtain $\KM_*A$-linear norm map $N_{B/A} \colon \KM_*B \nach \KM_*A$ for $B=A[X]/\skp{\pi}$ by precomposing with the linear inclusion of the factor. The projection formula \eqref{projection_formula_appendix} is nothing else than the statement that the norm maps are $\KM_*A$-linear.

\vquad\noindent
Now we treat the case of arbitrary residue fields. Let $A$ be an arbitrary factorial and local domain and $A\inj B$ an extension of local rings such that $B\cong A[X]/\skp{\pi}$ for a monic irreducible polynomial $\pi\in A[X]$.

\vquad
\textbf{Claim.} $(A[X]/\skp{\pi})(t_1,\ldots,t_k) \cong A(t_1,\ldots,t_k)[X]/\skp{\pi}$.

\vquad\noindent To see this, we first observe that the right-hand side is isomorphic to the tensor product $A(t_1,\ldots,t_k)\otimes_AA[X]/\skp{\pi}$. By Proposition~\ref{ring_of_fractions_base_change} above, the left-hand side fulfils this as well.

If $A$ has residue field $\kappa$, then $A(t_1,\ldots,t_k)$ has residue field $\kappa(t_1,\ldots,t_k)$ by Lemma~\ref{A(t)_local_infinite}. Hence we can reduce to the situation of infinite residue fields. The diagram
	\[ \begin{xy} 
		\xymatrix{		
		\KM_nA(t) \ar@{^{(}->}[r] \ar[d]^\delta
		& \K^\mathrm{t}_nA(t) \ar@{->>}[r]^-{\partial} \ar[d]^\delta
		& \bigoplus_P \KM_{n-1}(A(t)[X]/\skp{P}) \ar[d]^\delta
		\\	
		\KM_nA(t_1,t_2) \ar@{^{(}->}[r]
		& \K^\mathrm{t}_nA(t_1,t_2) \ar@{->>}[r]^-{\partial} 
		& \bigoplus_Q \KM_{n-1}(A(t_1,t_2)[X]/\skp{Q}) 
	}  
	\end{xy} \]	
commutes where $\delta = (\iota_1)_*-(\iota_2)_*$ and $P$ and $Q$ run over all monic irreducible polynomials over $A(t)$ respectively over $A(t_1,t_2)$ (note that a monic irreducible polynomial over $A(t)$ is also monic irreducible over $A(t_1,t_2)$ both via $\iota_1$ and via $\iota_2$). Thus the right-hand square in the diagram
	\[ \begin{xy} 
		\xymatrix{
		\iKM_nB \ar@{^{(}->}[r] \ar@{.>}[d]^{N_{B/A}}
		& \KM_nB(t) \ar[r]^-{\delta_n^\mathrm{M}} \ar[d]^{N_{B(t)/A(t)}}
		& \KM_nB(t_1,t_2) \ar[d]^{N_{B(t_1,t_2)/A(t_1,t_2)}}
		\\
		\iKM_nA \ar@{^{(}->}[r]
		& \KM_nA(t) \ar[r]^-{\delta_n^\mathrm{M}}
		& \KM_nA(t_1,t_2)
	}  
	\end{xy} \]	
commutes which enables us to define the desired norm map $N_{B/A}$ via restriction and the projection formula inherits from the case of infinite residue fields to the case of arbitrary residue fields.
\end{proof}


\end{document}